\renewcommand{\baselinestretch}{\baselinestretch}
\renewcommand{\baselinestretch}{1.2}
\numberwithin{equation}{section}
\newtheorem{lemma}[equation]{Lemma}
\newtheorem{proposition}[equation]{Proposition}
\newtheorem{theorem}[equation]{Theorem}
\newtheorem{corollary}[equation]{Corollary}
\theoremstyle{definition}
\newtheorem{example}[equation]{Example}
\newtheorem{remark}[equation]{Remark}
\providecommand{\bysame}{\leavevmode\hbox
	to3em{\hrulefill}\thinspace}
\newlist{inlineroman}{enumerate*}{1}
\setlist[inlineroman]{itemjoin*={{, and }},afterlabel=~,label=\roman*.}
 \numberwithin{dummy}{section}
\newenvironment{red}{\relax\color{red}}{\relax}
\newenvironment{blue}{\relax\color{blue}}{\hspace*{.5ex}\relax}
\newcommand{\ber}{\begin{red}}
\newcommand{\er}{\end{red}}
\newcommand{\beb}{\begin{blue}}
\newcommand{\eb}{\end{blue}}
\begin{document}
	
\newcommand{\pr}{\partial}
\newcommand{\nl}{\vskip 1pc}
\newcommand{\co}{\mbox{co}}
\newcommand{\ol}{\overline}
\newcommand{\om}{\Omega}
\newcommand{\ra}{\rightarrow}
\newcommand{\epsil}{\varepsilon}
\makeatletter
\newcommand*{\rom}[1]{\expandafter\@slowromancap\romannumeral #1@}
\newcommand{\powerset}{\raisebox{.15\baselineskip}{\Large\ensuremath{\wp}}}
\newcounter{counter}       
\newcommand{\upperRomannumeral}[1]{\setcounter{counter}{#1}\Roman{counter}}
\newcommand{\lowerromannumeral}[1]{\setcounter{counter}{#1}\roman{counter}}
\newcommand*\circled[1]{\tikz[baseline=(char.base)]{
		\node[shape=circle,draw,inner sep=1pt] (char) {#1};}}
\makeatother
\title{Rationality and $p$-adic properties of reduced forms of half-integral weight}

\author[S.~H. Choi, C.~H. Kim, Y.-W. Kwon and K.-H. Lee]{Suh Hyun Choi, Chang Heon Kim, Yeong-Wook Kwon and Kyu-Hwan Lee}

\address{Department of
Mathematics, University of Connecticut, Storrs, CT 06269, U.S.A.}
\email{suhhyun.choi@gmail.com}

\address{Department of Mathematics, Sungkyunkwan University, Suwon 16419, South Korea}
\email{chhkim@skku.edu}
\thanks{The second named author was supported by the National Research Foundation of Korea(NRF) grant funded by the Korea government(MSIT) (NRF-2015R1D1A1A01057428 and 2016R1A5A1008055)}

\address{Department of Mathematics, Sungkyunkwan University, Suwon 16419, South Korea}
\email{pronesis196884@gmail.com}

\address{Department of
Mathematics, University of Connecticut, Storrs, CT 06269, U.S.A.}
\email{khlee@math.uconn.edu}
\thanks{The fourth named author was partially supported by a grant from the Simons Foundation (\#318706).}

\subjclass[2010]{11F25, 11F30, 11F37}

\keywords{weakly holomorphic modualr forms, $\epsilon$-condition, reduced forms, Hecke operators, congruences}

\begin{abstract}  
In this paper we study special bases of certain spaces of half-integral weight weakly holomorphic modular forms. We establish a criterion for the integrality of Fourier coefficients of such bases. By using recursive relations between Hecke operators, we derive relations of Fourier coefficients of each basis element and  obtain congruences of the Fourier coefficients, which extend known congruences for traces of singular moduli. 
\end{abstract}
\maketitle

\section{Introduction and Statement of Results}\label{intro}

Let $N$ be a positive integer. For an odd integer $k$, we denote by $M^{!+\cdots +}_{k/2}(N)$ the space of weakly holomorphic modular forms of weight $k/2$ on $\Gamma_{0}(4N)$ whose $n^{\text{th}}$ Fourier coefficient vanishes unless $(-1)^{(k-1)/2}\,n$ is a square modulo $4N$. For the moment, we assume that $N$ is contained in the set
$$\mathfrak{S}=\{1,2,3,5,7,11,13,17,19,23,29,31,41,47,59,71\}.$$
Then the group $\Gamma_{0}^{*}(N)$, which is the group generated by $\Gamma_{0}(N)$ and all Atkin--Lehner involutions $W_{e}$ for $e\parallel N$, has genus $0$. 
From the correspondence between Jacobi forms and half-integral weight forms (cf. \cite[Theorem 5.6]{EiZa1}), 
we see that for any $D\in\mathbb{Z}_{>0}$ with $D\equiv\square\pmod{4N}$, there is a unique modular form $g_{D,N}\in M^{!+\cdots +}_{3/2}(N)$ having a Fourier expansion of the form 
$$g_{D,N}(\tau)=q^{-D}+\sum_{d\geq 0}B^{(N)}(D,d)\, q^{d}\quad(q=e^{2\pi i\tau},~\tau\in\mathbb{H}).$$
Here, $\mathbb{H}$ denotes the complex upper half plane.

Let $\ell$ be a prime with $\ell\nmid 4N$. Then the Hecke operator $T_{k/2,4N}(\ell^{2})$, originally defined on the space of weakly holomorphic modular forms of weight $k/2$ on $\Gamma_{0}(4N)$, acts on $M_{k/2}^{!+\cdots+}(N)$. We define $T_{k/2,4N}(\ell^{2n})$ for $n\geq 2$ recursively by
$$T_{k/2,4N}(\ell^{2n}):=T_{k/2,4N}(\ell^{2n-2})T_{k/2,4N}(\ell^{2})-\ell^{k-2}T_{k/2,N}(\ell^{2n-4}).$$
For any positive integer $m$ with $\gcd{(m,4N)}=1$, 
 define $T_{k/2,4N}(m^{2})$ multiplicatively and set
$$g_{D,N}^{(m)}:=g_{D,N}\mid T_{3/2,4N}(m^{2}).$$
We denote by $B^{(N)}_{m}(D,d)$ the $d^{\text{th}}$ Fourier coefficient of $g_{D,N}^{(m)}(\tau)$:
$$g_{D,N}^{(m)}(\tau)=\mbox{(principal part)}+\sum_{d\geq 0}B^{(N)}_{m}(D,d)\, q^{d}.$$

By the works of Zagier \cite{Zag2} and Kim \cite{Kim2}, the coefficients $B^{(N)}_{m}(D,d)$ can be interpreted as traces of CM values of certain modular functions (or traces of singular moduli).
Remarkably, the coefficients $B^{(N)}_{m}(D,d)$ show many congruence properties, and many authors studied them. In 2005, Ahlgren and Ono \cite{AhO1} showed that if $p\nmid m$ is an odd prime and $\left(\frac{-d}{p}\right)=1$, then
$$B^{(1)}_{m}(1,p^{2}d)\equiv 0\pmod{p}.$$
Edixhoven \cite{Edix1} used the $p$-adic geometry of modular curves to show that, for any $m$ and any $d$ with $\left(\frac{-d}{p}\right)=1$, we have
\begin{equation*}
B^{(1)}_{m}(1,p^{2n}d)\equiv 0\pmod{p^n}.
\end{equation*}
When $p$ is an odd prime, Jenkins \cite{Jen1} obtained a recursive formula for $B^{(1)}_{1}(D,p^{2n}d)$ in terms of $B^{(1)}_{1}(D,p^{2k}d)$ with $k<n$. As a corollary he proved that if $\left(\frac{-d}{p}\right)=\left(\frac{D}{p}\right)\neq 0$, then we have
\begin{equation*}\label{1.8}
B^{(1)}_{1}(D,p^{2n}d)=p^{n}B^{(1)}_{1}(p^{2n}D,d).
\end{equation*}
Guerzhoy \cite{Guer1} showed that if $D$ and $-d$ are fundamental discriminants with $\left(\frac{-d}{p}\right)=\left(\frac{D}{p}\right)$, then, for any $m$, we have
\begin{equation*}
B^{(1)}_{m}(D,p^{2n}d)=p^{n}B^{(1)}_{m}(p^{2n}D,d).
\end{equation*}
In 2012, Ahlgren \cite{Ahl1} proved a general theorem which implies the above results as special cases. On the other hand, Osburn \cite{Osb1} proved that if $d$ is a positive integer such that $-d$ is congruent to a square modulo $4N$ and if $p\neq N$ is an odd prime which splits in $\mathbb{Q}(\sqrt{-d})$, then
$$B^{(N)}_{1}(1,p^{2}d)\equiv 0\pmod{p}.$$
Jenkins \cite{Jen2} and Koo and Shin \cite{KS1} obtained the following generalization of Osburn's result: for a positive integer $d$ such that $-d\equiv\square\pmod{4N}$ and an odd prime $p\neq N$ which splits in $\mathbb{Q}(\sqrt{-d})$,
$$B^{(N)}_{1}(1,p^{2n}d)\equiv 0\pmod{p^{n}}$$
for all $n\geq 1$.

The purpose of this paper is to generalize all these congruences to more general modular forms. To be precise, from now on, we assume that $N\geq 1$ is odd and square-free. For an even Dirichlet character $\chi$ modulo $4N$, we denote by $M^{!}_{k/2}(4N,\chi)$ the space of weakly holomorphic modular forms of weight $k/2$ on $\Gamma_{0}(4N)$ with Nebentypus $\chi$. The subspace of holomorphic forms and that of cuspforms are denoted by $M_{k/2}(4N,\chi)$ and $S_{k/2}(4N,\chi)$ respectively.

Let $\mathcal{D}$ be a discriminant form of level $4N$ satisfying some additional conditions which will be given in Section \ref{sec-epsilon}. (For the basics on discriminant forms, see Section \ref{disc} below.) Then $\mathcal{D}$ determines an even Dirichlet character $\chi$ modulo $4N$ and a sign vector $\epsilon=(\epsilon_{p})_{p}$ over $p=2$ or $p\mid N$ with $\chi_{p}\neq 1$, where the character $\chi$ is decomposed into $p$-components: $\chi=\prod_{p}\chi_{p}$. Set $\chi'=\chi\left(\frac{4N}{\cdot}\right)$.

We define the associated modular form space $M_{k/2}^{!\epsilon}(N,\chi')$ to be the subspace of $M_{k/2}^{!}(4N,\chi')$ consisting of the forms $f\in M_{k/2}^{!}(4N,\chi')$ satisfying the so-called \emph{$\epsilon$-condition}, which will be defined in Section \ref{sec-epsilon}. We let $$M_{k/2}^{\epsilon}(N,\chi')=M_{k/2}^{!\epsilon}(N,\chi')\cap M_{k/2}(4N,\chi') \text{ and } S_{k/2}^{\epsilon}(N,\chi')=M_{k/2}^{!\epsilon}(N,\chi')\cap S_{k/2}(4N,\chi').$$ 

Let us give an example. Consider the following even lattice
$$L=\left\{\left(\begin{array}{cc}a & b/N \\ c & -a\end{array}\right):a,b,c\in\mathbb{Z}\right\},$$
with $Q(\alpha)=-N\det(\alpha)$ and $(\alpha,\beta)=N\mathrm{tr}(\alpha\beta)$. We denote by $L'$ the dual lattice of $L$. Then the space $M_{k/2}^{!\epsilon}(N,\chi')$ associated with the discriminant form $L'/L$ is exactly the same as the space $M_{k/2}^{!+\cdots+}(N)$. Hence the $\epsilon$-condition can be considered as a generalization of the Kohnen plus condition. 

Now we further assume that $\chi_{p}\neq 1$ for each $p\mid N$, so $\chi'=1$. In \cite{Zhang2}, Zhang defined a family of forms in $M_{k/2}^{!\epsilon}(N,1)$, called \emph{reduced forms}. (For the definition, see Section \ref{sec-epsilon}.) If a reduced form $f_m$ exists for some $m \in \mathbb Z$, it must be unique and $\chi_{p}(m)\neq-\epsilon_{p}$ for each $p\mid N$. The set of reduced modular forms forms a basis for $M_{k/2}^{!\epsilon}(N,1)$. When $k=3$ and $N\in\mathfrak{S}$, the reduced form $f_{-D}$ exists for each $D>0$ which is a square modulo $4N$ (cf. Proposition \ref{prop-zhang2} below). In fact, $s(-D)f_{-D}=g_{D,N}$ for every $D$ where $s(-D)$ is a scaling constant. Thus the reduced forms are natural generalizations of the forms $g_{D,N}$. 

In order to generalize the congruences mentioned above to reduced forms, we first need to check integrality of the Fourier coefficients of reduced forms. We establish the following proposition which allows us to check whether a fixed reduced form has integer Fourier coefficients.

\begin{proposition} \label{prop-1.1}
Let $k$ be an odd integer. Assume that $f=\sum_{n}a(n)q^{n}\in M_{k/2}^{!\epsilon}(N,\chi')\cap\mathbb{Q}(\!(q)\!)$ with bounded denominator, and that $a(n)\neq 0$ for some $n<0$. Furthermore, let $k'$ be the smallest positive integer which satisfies $k'\geq |\mathrm{ord}_{\infty}(f)|/4N$ and $k+12k'> 0$. If $a(n)\in\mathbb{Z}$ for $n\leq\mathrm{ord}_{\infty}(f)+\frac{k+12k'}{12}[\mathrm{SL}_{2}(\mathbb{Z}):\Gamma_{0}(4N)]$, then $a(n)\in\mathbb{Z}$ for all $n$.
\end{proposition}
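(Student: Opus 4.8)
The plan is to multiply $f$ by a suitable power of a holomorphic modular form with integer Fourier coefficients so as to obtain a \emph{holomorphic} modular form, then invoke the fact that a holomorphic modular form of bounded level and weight has only finitely many Fourier coefficients one must check before integrality of all coefficients follows (a Sturm-type bound). Concretely, let $\Delta_{4N}(\tau)=\Delta(\tau)^{?}$ — more precisely, work with $E(\tau)=\Delta(\tau)\in M_{12}(\mathrm{SL}_2(\mathbb{Z}))$, which has integer $q$-expansion $q+\cdots$ and vanishes to order $1$ at $\infty$ but nowhere else on $\mathbb{H}$. Replacing $\Delta$ by a level-$4N$ analogue, or simply using $\Delta^{k'}$ viewed on $\Gamma_0(4N)$, the product $g:=f\cdot\Delta^{k'}$ lies in $M^{!}_{(k+12k')/2}(4N,\chi')$ (the $\epsilon$-condition is irrelevant here since we only need holomorphy), has bounded denominators, and by the choice $k'\geq |\mathrm{ord}_\infty(f)|/4N$ and $k+12k'>0$ one checks that $g$ is actually \emph{holomorphic} at every cusp: at $\infty$ this is because $\Delta^{k'}$ vanishes there to order $k'\geq$ the size of the pole of $f$ at $\infty$ (after accounting for the width $4N$), and at the other cusps one uses that $\Delta^{k'}$ has large vanishing order while the pole orders of $f$ at those cusps are controlled by $\mathrm{ord}_\infty(f)$ together with the width. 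Thus $g\in M_{(k+12k')/2}(4N,\chi')$.

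Next I would apply the standard Sturm bound for holomorphic modular forms of half-integral weight: if $g=\sum_m b(m)q^m\in M_{w}(4N,\chi')$ has rational coefficients with bounded denominators and $b(m)\in\mathbb{Z}$ for all $m\leq \tfrac{w}{12}[\mathrm{SL}_2(\mathbb{Z}):\Gamma_0(4N)]$, then $b(m)\in\mathbb{Z}$ for all $m$. (For forms with bounded denominators this follows from reducing mod a prime $p$ dividing some denominator and applying Sturm's theorem that a mod-$p$ modular form whose initial segment up to the Sturm bound vanishes must vanish identically; one applies this to the prime powers appearing in the denominators.) Here $w=(k+12k')/2$, so the Sturm bound is $\tfrac{k+12k'}{24}[\mathrm{SL}_2(\mathbb{Z}):\Gamma_0(4N)]$ — I would need to track the factor-of-two conventions for half-integral weight, but in any case the bound for $g$ translates, after shifting by $\mathrm{ord}_\infty(\Delta^{k'})=k'$ relative to the normalization in the statement, into exactly the stated bound $\mathrm{ord}_\infty(f)+\tfrac{k+12k'}{12}[\mathrm{SL}_2(\mathbb{Z}):\Gamma_0(4N)]$ on the index $n$ for $f$.

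Finally, I would descend: since $g=f\Delta^{k'}$ and $\Delta^{k'}=q^{k'}(1+\cdots)$ is a unit in $\mathbb{Z}(\!(q)\!)$, integrality of all coefficients of $g$ is equivalent to integrality of all coefficients of $f$, and integrality of the coefficients $b(m)$ of $g$ for $m$ up to the Sturm bound is implied by integrality of the coefficients $a(n)$ of $f$ for $n$ up to the correspondingly shifted bound — this is where the explicit range $n\leq\mathrm{ord}_\infty(f)+\tfrac{k+12k'}{12}[\mathrm{SL}_2(\mathbb{Z}):\Gamma_0(4N)]$ in the hypothesis is used. Putting these together yields $a(n)\in\mathbb{Z}$ for all $n$.

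The main obstacle I anticipate is the verification that $g=f\Delta^{k'}$ is genuinely holomorphic \emph{at all cusps} of $\Gamma_0(4N)$, not merely at $\infty$: one must bound the pole order of $f$ at an arbitrary cusp $\mathfrak{a}$ in terms of $\mathrm{ord}_\infty(f)$. This requires the (standard but slightly technical) fact that for a weakly holomorphic form the total "pole mass" is constrained, or more simply the observation that the order of vanishing of $\Delta$ at each cusp of $\Gamma_0(4N)$, when measured in the local uniformizer, is at least $1/(4N)$ times the width, which is why the bound $k'\geq|\mathrm{ord}_\infty(f)|/4N$ appears. Carefully matching the half-integral-weight Sturm bound constant (and the index-shift bookkeeping between $f$ and $g$) is the other place where care is needed, though it is routine.
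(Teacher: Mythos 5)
Your overall strategy coincides with the paper's: clear the pole by multiplying with a power of $\Delta$, apply a bounded-denominator Sturm-type integrality statement in half-integral weight (this is Corollary \ref{cor-let}), and descend because the multiplier is $q^{s}$ times a unit of $\mathbb{Z}[\![q]\!]$. However, your choice of multiplier creates a genuine gap. The paper multiplies by $\Delta(4N\tau)^{k'}$, whose expansion is $q^{4Nk'}+\cdots$, so the hypothesis $k'\geq|\mathrm{ord}_{\infty}(f)|/4N$ exactly kills the pole at $\infty$, and the resulting index shift by $4Nk'\geq|\mathrm{ord}_{\infty}(f)|$ is what converts the Sturm range for the holomorphic product into the stated range $n\leq\mathrm{ord}_{\infty}(f)+\frac{k+12k'}{12}[\mathrm{SL}_{2}(\mathbb{Z}):\Gamma_{0}(4N)]$ for $f$. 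You instead use $\Delta(\tau)^{k'}$, which vanishes at $\infty$ only to order $k'$ (the cusp $\infty$ has width $1$ on $\Gamma_{0}(4N)$; there is no width-$4N$ factor there), so with $k'\geq|\mathrm{ord}_{\infty}(f)|/4N$ the product $f\Delta^{k'}$ need not be holomorphic at $\infty$ at all, and your shift ``by $\mathrm{ord}_{\infty}(\Delta^{k'})=k'$'' does not reproduce the bound in the statement; you would need $k'\geq|\mathrm{ord}_{\infty}(f)|$, which is not assumed. (Note also that the product has weight $k/2+12k'$, not $(k+12k')/2$, and the Sturm bound must be taken for that weight via Corollary \ref{cor-let}.)

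The second gap is the treatment of the other cusps, which you flag as the main obstacle but then propose to handle by a general constraint on the ``pole mass'' of a weakly holomorphic form; no such constraint exists: on $\Gamma_{0}(4N)$ a weakly holomorphic form may have poles of arbitrary order at the other cusps, unrelated to $\mathrm{ord}_{\infty}(f)$, while $\Delta(4N\tau)^{k'}$ (or $\Delta(\tau)^{k'}$) vanishes there only to bounded order. The point you dismiss as ``irrelevant'' --- the $\epsilon$-condition --- is precisely what closes this gap: multiplication by $\Delta(4N\tau)^{k'}$ keeps the form in the $\epsilon$-space (its expansion is supported on exponents divisible by $4N$, so the coefficient conditions are unaffected), and for forms satisfying the $\epsilon$-condition holomorphy at $\infty$ already forces holomorphy at all cusps; this is how the paper argues (via \cite[Corollary 5.5]{Zhang2}, used in the same way in the rationality proposition of Section 3 and again in Section 4). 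So the mechanism that makes $k'\geq|\mathrm{ord}_{\infty}(f)|/4N$ sufficient is the combination of the multiplier $\Delta(4N\tau)^{k'}$ with the $\epsilon$-structure, neither of which appears in your write-up; as written, your argument does not establish holomorphy of the product at any cusp, and the final bookkeeping does not match the stated bound.
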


Let $\mathcal{D}^{*}$ be the dual discriminant form of $\mathcal{D}$. It is known that the corresponding data to $\mathcal{D}^{*}$ is $(4N,\chi',\epsilon^{*})$ with $\epsilon_{p}^{*}=\chi_{p}(-1)\epsilon_{p}$. Denote by $M_{k/2}^{\epsilon^{*}}(N,\chi')$ the space of modular forms associated to $\mathcal{D}^{*}$. We denote by $a(m,n)$ the $n^{\mathrm{th}}$ Fourier coefficient of the reduced form $f_{m}$. 
We prove the following theorem which turns the integrality problem for reduced forms into checking finitely many of them.

\begin{theorem} \label{thm-1.2}
Let $m_{\epsilon}=\max\{m : f_{m}^{*}\in M_{2-k/2}^{\epsilon^{*}}(N,\chi')~\textnormal{exists}\}$. Assume that for all $n\in\mathbb{Z}$ and $m\geq -4N-m_{\epsilon}$, we have $s(m)a(m,n)\in\mathbb{Z}$. Then $s(m)a(m,n)\in\mathbb{Z}$ for all $m,n\in\mathbb{Z}$.
\end{theorem}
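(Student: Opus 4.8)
The plan is to leverage Proposition \ref{prop-1.1}: for a single reduced form $f_m$, integrality of \emph{all} Fourier coefficients follows once we know integrality of the finitely many coefficients with index $n \le \mathrm{ord}_\infty(f_m) + \frac{k+12k'}{12}[\mathrm{SL}_2(\mathbb Z):\Gamma_0(4N)]$. So it suffices to show that the hypothesis — $s(m)a(m,n)\in\mathbb Z$ for all $n$ and all $m \ge -4N - m_\epsilon$ — together with some structural input forces integrality of those finitely many ``low'' coefficients for \emph{every} $m$, including $m < -4N - m_\epsilon$. The natural tool is a pairing (duality) between the reduced forms $f_m$ for $M_{k/2}^{!\epsilon}(N,\chi')$ and the dual reduced forms $f_m^*$ for $M_{2-k/2}^{\epsilon^*}(N,\chi')$, analogous to the classical Zagier duality $B^{(N)}(D,d) = -B^{(N)}_*(d,D)$. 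Such a duality should read, schematically, $s(m)a(m,n) = -s^*(n)a^*(n,m)$ (up to a normalization constant and with appropriate sign), where $a^*(n,m)$ is the $m$-th coefficient of the dual reduced form $f_n^*$.

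First I would record the duality identity precisely, identifying exactly which coefficient $a(m,n)$ equals which coefficient of which dual reduced form, and pinning down the scaling constants $s(m)$, $s^*(n)$ so that the identity holds integrally (not just rationally). This is where the hypothesis $\chi_p \ne 1$ for all $p \mid N$, hence $\chi' = 1$, and the explicit shape of the principal parts of reduced forms (each $f_m = q^m + O(q)$ after scaling, supported on a single negative exponent) get used: the pairing $\langle f, g\rangle = $ constant term of $fg$ vanishes because $fg \in M_{2}^{!}(4N)$ with no constant term is forced, and matching principal parts extracts the stated coefficient identity.

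Next, fix an arbitrary $m \in \mathbb Z$ and consider the finitely many coefficients $a(m,n)$ with $n$ in the range dictated by Proposition \ref{prop-1.1}. By the duality identity, each such $s(m)a(m,n)$ equals $\pm s^*(n)a^*(n,m)$, i.e.\ the $m$-th coefficient of the dual reduced form $f_n^*$. Now $f_n^*$ exists only for $n \le m_\epsilon$ (by definition of $m_\epsilon$), and when it exists, its principal part is $q^{-n} + (\text{lower order})$, so $f_n^*$ has nonzero coefficients only in degrees $\ge -n \ge -m_\epsilon$. Hence $a^*(n,m) = 0$ unless $m \ge -m_\epsilon$ — and in fact, because the range of $n$ in Proposition \ref{prop-1.1} is bounded in terms of $\mathrm{ord}_\infty(f_m)$ and the principal part of $f_m$ sits at a single exponent, the relevant $n$ satisfy $n \ge -4N \cdot(\text{something})$; the bookkeeping here should show that $a^*(n,m)$ is one of the coefficients $s^*(n')a^*(n',m')$ with $m' = m \ge -4N - m_\epsilon$, which is integral by hypothesis. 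Thus all the ``low'' coefficients $s(m)a(m,n)$ are integral, and Proposition \ref{prop-1.1} (applied to $s(m)f_m$, which lies in $M_{k/2}^{!\epsilon}(N,\chi') \cap \mathbb Q(\!(q)\!)$ with bounded denominator since it has only finitely many negative-index coefficients and integral ones among the low positive ones) finishes the argument for that $m$.

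The main obstacle will be the precise index bookkeeping: translating the bound ``$n \le \mathrm{ord}_\infty(f_m) + \frac{k+12k'}{12}[\mathrm{SL}_2(\mathbb Z):\Gamma_0(4N)]$'' from Proposition \ref{prop-1.1}, through the duality $a(m,n) \leftrightarrow a^*(n,m)$, into the clean condition ``$m \ge -4N - m_\epsilon$'' appearing in the theorem. One must verify that whenever $f_n^*$ is nonzero in degree $m$ (so that the dual coefficient is not trivially zero), the index $n$ lies in a range for which $s(n)f_n$ — or rather, the coefficient being invoked — is covered by the hypothesis; the constant $4N$ should emerge from the width of the principal part block forced by the $\epsilon$-condition (reduced forms for a level-$4N$ discriminant form have principal parts spread over a window of length $O(N)$), and from the relation between $\mathrm{ord}_\infty$ of $f_m$ and $m$ itself. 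Once that inequality is checked, everything else is an assembly of the duality identity and Proposition \ref{prop-1.1}.
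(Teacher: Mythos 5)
There is a genuine gap, and it sits exactly at the step you flag as ``bookkeeping.'' Your plan is to express each low-index coefficient $s(m)a(m,n)$, via Zagier duality, as a coefficient of a dual reduced form $f_n^{*}$, and then argue that the relevant dual coefficients are (a) mostly zero because ``$f_n^{*}$ exists only for $n\le m_\epsilon$'' and (b) otherwise covered by the hypothesis because ``$m'=m\ge -4N-m_\epsilon$.'' Both claims fail. First, the forms entering a Zagier-type duality for $M_{k/2}^{!\epsilon}(N,\chi')$ are the \emph{weakly holomorphic} reduced forms of the dual weight $2-k/2$, which exist for infinitely many indices; the set $\{m: f_m^{*}\in M_{2-k/2}^{\epsilon^{*}}(N,\chi')\}$ in the definition of $m_\epsilon$ consists only of the finitely many \emph{holomorphic} dual reduced forms (the obstruction space), so the vanishing you infer from ``$n\le m_\epsilon$'' does not apply to the dual forms actually appearing in the duality. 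Second, and decisively, the hypothesis of the theorem concerns integrality of the coefficients of the original family $f_m$ with $m\ge -4N-m_\epsilon$, not of the dual family; to convert the dual coefficient back into something covered by the hypothesis you would have to apply duality again at the pair $(m,n)$ itself, and your inequality ``$m\ge -4N-m_\epsilon$'' is precisely what fails in the case under consideration ($m<-4N-m_\epsilon$ is the whole content of the theorem). So the argument is circular, and no amount of index bookkeeping with Proposition \ref{prop-1.1} repairs it, because no integrality input for the dual family is available.

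The missing idea is much more elementary and explains where $4N$ really comes from: given $m'<-4N-m_\epsilon$, write $m'=-4Nl+m_0'$ with $-4N-m_\epsilon\le m_0'<-m_\epsilon$ and $l\ge 1$, and consider $g=j(4N\tau)^{l}f_{m_0'}\in M_{k/2}^{!\epsilon}(N,\chi')$. Since $j$ has integer coefficients and $f_{m_0'}$ is covered by the hypothesis, $s(m_0')g$ has integer coefficients and the same leading term $q^{m'}$ as $s(m')f_{m'}$. Expanding the difference in the basis of reduced forms, $s(m')f_{m'}=s(m_0')g-\sum_{m>m'}s(m_0')b(m)\,s(m)f_m$, and a downward induction on $m$ (the base cases being supplied by the hypothesis) gives $s(m')a(m',n)\in\mathbb{Z}$ for all $n$. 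The constant $4N$ is the pole order of $j(4N\tau)$, not a feature of the width of principal parts; Proposition \ref{prop-1.1} plays no role in this step (it is only the tool for verifying the hypothesis on the finitely many forms with $m\ge -4N-m_\epsilon$, as in Example \ref{ex-int}).
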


Therefore, to check the integrality of reduced forms, it suffices to show the integrality of a finite number of Fourier coefficients satisfying the conditions of both Proposition \ref{prop-1.1} and Theorem \ref{thm-1.2}. We give an example to illustrate this.

\begin{example} \label{ex-int}
We consider the space {$M_{1/2}^{!+\cdots +}(7,1)$}. Then we have $m_\epsilon=-1$. Define $$E_{k}(\tau)=1-\frac{2k}{B_{k}}\sum_{n=1}^{\infty}\sigma_{k-1}(n)q^{n}\qquad (2<k\in 2\mathbb{Z})$$ to be the normalized Eisenstein series,
and denote by $[\cdot,\cdot]_{n}$ ($n\geq 1$) the $n^{\text{th}}$ Rankin--Cohen bracket (cf. \cite[pp.53-58]{BrGeHaZa1}). Set
\[
\begin{array}{llll}
\mathrm{RC}_{1}=\frac{[\theta,E_{10}(28\tau)]_{1}}{\Delta(28\tau)}, & \mathrm{RC}_{2}=\frac{[\theta,E_{8}(28\tau)]_{2}}{\Delta(28\tau)}, & \mathrm{RC}_{3}=\frac{[\theta,E_{6}(28\tau)]_{3}}{\Delta(28\tau)}, & \mathrm{RC}_{4}=\frac{[\theta,E_{4}(28\tau)]_{4}}{\Delta(28\tau)}, \\ \mathrm{RC}_{5}=\frac{[\mathrm{RC}_{1},E_{10}(28\tau)]_{1}}{\Delta(28\tau)}, & \mathrm{RC}_{6}=\frac{[\mathrm{RC}_{1},E_{8}(28\tau)]_{2}}{\Delta(28\tau)}, &
\mathrm{RC}_{7}=\frac{[\mathrm{RC}_{1},E_{6}(28\tau)]_{3}}{\Delta(28\tau)}, & \mathrm{RC}_{8}=\frac{[\mathrm{RC}_{1},E_{4}(28\tau)]_{4}}{\Delta(28\tau)}, \\ \mathrm{RC}_{9}=\frac{[\mathrm{RC}_{2},E_{10}(28\tau)]_{1}}{\Delta(28\tau)}, &
\mathrm{RC}_{10}=\frac{[\mathrm{RC}_{2},E_{8}(28\tau)]_{2}}{\Delta(28\tau)}, & \mathrm{RC}_{11}=\frac{[\mathrm{RC}_{2},E_{6}(28\tau)]_{3}}{\Delta(28\tau)}, & \mathrm{RC}_{12}=\frac{[\mathrm{RC}_{2},E_{4}(28\tau)]_{4}}{\Delta(28\tau)}, \\
\mathrm{RC}_{13}=\frac{[\mathrm{RC}_{1},E_{10}(28\tau)]_{1}}{\Delta(28\tau)}, & \mathrm{RC}_{14}=\frac{[\mathrm{RC}_{3},E_{8}(28\tau)]_{2}}{\Delta(28\tau)}, & \mathrm{RC}_{15}=\frac{[\mathrm{RC}_{3},E_{6}(28\tau)]_{3}}{\Delta(28\tau)}, & \mathrm{RC}_{16}=\frac{[\mathrm{RC}_{3},E_{4}(28\tau)]_{4}}{\Delta(28\tau)}.\\
\end{array}
\]
In addition, we set
$$f=\tfrac{1}{5600}\mathrm{RC}_{1}+\tfrac{7}{103680}\mathrm{RC}_{2}+\tfrac{1}{80640}\mathrm{RC}_{3}+\tfrac{1}{705600}\mathrm{RC}_{4}-\tfrac{41687}{1800}\theta,$$
and define
\[ \mathrm{RC}_{17}=\tfrac{[f,E_{4}(28\tau)]_{4}}{\Delta(28\tau)}.\]
By taking linear combinations of these Rankin-Cohen brackets, we find
\begin{align*}
s(0)f_{0}&=1+2q+2q^4+2q^{9}+2q^{16}+\cdots,\\
s(-3)f_{-3}&=q^{-3}-3q-2q^{4}+6q^{8}+5q^{9}-10q^{16}+\cdots,\\
s(-7)f_{-7}&=q^{-7}-10q+4q^{4}+28q^{8}-24q^{9}+60q^{16}+\cdots,\\
s(-12)f_{-12}&=q^{-12}-10q-25q^{4}-6q^{8}+46q^{9}+152q^{16}+\cdots,\\
s(-19)f_{-19}&=q^{-19}-q-50q^{4}-50q^{8}-153q^{9}+798q^{16}+\cdots,\\
s(-20)f_{-20}&=q^{-20} - 22q + 26q^{4} - 180q^{8} - 78q^{9} - 338q^{16}+\cdots,\\
s(-24)f_{-24}&=q^{-24} - 2q - 28q^{4} + 225q^{8} - 450q^{9} - 2976q^{16}+\cdots,\\
s(-27)f_{-27}&=q^{-27} + 12q + 52q^{4} - 468q^{8} + 156q^{9} - 1300q^{16}+\cdots.   \end{align*}

For example, we obtain
\begin{align*}
f_{3}&=-\tfrac{92368453}{1197504000}\, \mathrm{RC}_{1}-\tfrac{1105849}{739031040}\, \mathrm{RC}_{2}-\tfrac{7775323}{804722688000}\, \mathrm{RC}_{3}+\tfrac{31109}{68584320000}\, \mathrm{RC}_{4}\\
&\quad-\tfrac{1}{49268736000}\, \mathrm{RC}_{7}+\tfrac{1}{862202880000}\, \mathrm{RC}_{8}-\tfrac{1}{86910050304000}\, \mathrm{RC}_{12}\\
&\quad+\tfrac{1}{216309458534400}\, \mathrm{RC}_{15}+\tfrac{83841213721}{1026432000}\, \theta\\
&=q^{-3} - 3q - 2q^{4} + 6q^{8} + 5q^{9} - 10q^{16}+\cdots.
\end{align*}
By Proposition \ref{prop-1.1}, the forms $s(0)f_{0},\ldots,s(-27)f_{-27}$ have integer Fourier coefficients. It follows from Theorem \ref{thm-1.2} that every reduced form in {$M_{1/2}^{!+\cdots+}(7,1)$} has integer Fourier coefficients.
\end{example}

Now we assume that, for any reduced form
$$f_{m}=\sum_{n}a(m,n)\, q^{n}\in M_{k/2}^{!\epsilon}(N,1),$$
the form $s(m)f_{m}$ has integer Fourier coefficients. Furthermore, let $k\geq 3$ be an odd integer and set $\lambda=(k-1)/2$. Then the reduced form $f_{m}\in M_{k/2}^{!\epsilon}(N,1)$ exists for every $m\in\mathbb{Z}_{<0}$ with $\chi_{p}(m)\neq -\epsilon_{p}$ for all $p\mid N$. We write
$$F_{m}(\tau)=s(m)f_{m}(\tau)=q^{m}+\sum\limits_{\substack{d\geq 0 \\ \chi_{p}(d)\neq -\epsilon_{p}~\textnormal{for all}~p\mid N}}B^{(N)}(m,d)\, q^{d}.$$
Note that the Hecke operator $T_{k/2,4N}(\ell^{2})$ acts on the space $M_{k/2}^{!\epsilon}(N,1)$ for each prime $\ell$ with $\gcd{(\ell,4N)}=1$. For any positive integer $t$ with $\gcd{(t,4N)}=1$, define
$$F_{m}^{(t)}:=F_{m}\mid T_{k/2,4N}(t^{2}).$$
Then we obtain the coefficients $B^{(N)}_{t}(m,d)$ from the equation
$$F_{m}^{(t)}(\tau)=\mbox{(principal part)}+\sum\limits_{\substack{d\geq 0,\\ \chi_{p}(d)\neq-\epsilon_{p}~\textnormal{for all}~p\mid N}}B^{(N)}_{t}(m,d)\, q^{d}.$$

We state our main theorem which describes various relations among the coefficients $B^{(N)}_{t}(m,d)$.

\begin{theorem}\label{thm-1.4}
We have the following:
\begin{enumerate}
\item[\textnormal{(\lowerromannumeral{1})}] $B^{(N)}_{t}(m,\ell^{2n+2}d)-\ell^{\lambda-1}\left(\frac{(-1)^{\lambda}m}{\ell}\right)B^{(N)}_{t}(m,\ell^{2n}d)=\ell^{(k-2)n}\left \{ B^{(N)}_{t}(\ell^{2n}m,\ell^{2}d)-B^{(N)}_{t}(\ell^{2n-2}m,d)\right \}$.
\item[\textnormal{(\lowerromannumeral{2})}] If $\ell\nmid d$, then 
\begin{align*}
\ell^{(k-2)n}&B^{(N)}_{t}(\ell^{2n}m,d)=B^{(N)}_{t}(m,\ell^{2n}d)\\
&\quad+\left[\left(\frac{(-1)^{\lambda}d}{\ell}\right)-\left(\frac{(-1)^{\lambda}m}{\ell}\right)\right]\cdot\sum_{k=1}^{n}\ell^{(\lambda-1)k}\left(\frac{(-1)^{\lambda}d}{\ell}\right)^{k-1}B^{(N)}_{t}(m,\ell^{2n-2k}d).
\end{align*}
\item[\textnormal{(\lowerromannumeral{3})}] If $\ell\parallel d$, then 
$$\ell^{(k-2)n}B^{(N)}_{t}(\ell^{2n}m,d)=B^{(N)}_{t}(m,\ell^{2n}d)-\ell^{\lambda-1}\left(\frac{(-1)^{\lambda}m}{\ell}\right)\cdot B^{(N)}_{t}(m,\ell^{2n-2}d).$$
\end{enumerate}
\end{theorem}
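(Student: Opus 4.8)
I would build everything on two ingredients: the explicit action of $T_{k/2,4N}(\ell^{2})$ on Fourier expansions, and a linearization of this operator on the reduced basis. I would first record that on $g=\sum_{r}c(r)q^{r}\in M_{k/2}^{!\epsilon}(N,1)$ (weight $k/2=\lambda+\tfrac12$) the operator acts by
\[
c(r)\ \longmapsto\ c(\ell^{2}r)+\left(\tfrac{(-1)^{\lambda}r}{\ell}\right)\ell^{\lambda-1}c(r)+\ell^{k-2}c(r/\ell^{2}),
\]
where $c(x):=0$ unless $x\in\mathbb Z$, noting $2\lambda-1=k-2$ and that (as noted above) this preserves $M_{k/2}^{!\epsilon}(N,1)$. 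Since each reduced form has a single term of negative index, $F_{m}=q^{m}+\sum_{d\ge 0}B^{(N)}(m,d)q^{d}$, the principal part of $F_{m}\mid T_{k/2,4N}(\ell^{2})$ is $\ell^{k-2}q^{\ell^{2}m}+\left(\tfrac{(-1)^{\lambda}m}{\ell}\right)\ell^{\lambda-1}q^{m}+q^{m/\ell^{2}}$ (the last term only when $\ell^{2}\mid m$). Matching this against the reduced basis and checking that the remaining holomorphic contribution vanishes gives the structural identity
\[
F_{m}\mid T_{k/2,4N}(\ell^{2})=\ell^{k-2}F_{\ell^{2}m}+\left(\tfrac{(-1)^{\lambda}m}{\ell}\right)\ell^{\lambda-1}F_{m}+F_{m/\ell^{2}},
\]
with $F_{m/\ell^{2}}:=0$ when $\ell^{2}\nmid m$; and since the $T_{k/2,4N}(t^{2})$ form a commuting family, the identity holds verbatim with every $F$ replaced by $F^{(t)}$.

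I would then lift this to the operators $T_{k/2,4N}(\ell^{2n})$. Applying the structural identity to $F_{\ell^{2j}m}^{(t)}$ — where the middle term disappears because $\ell\mid\ell^{2j}m$ — shows that $H_{n}:=\ell^{(k-2)n}F_{\ell^{2n}m}^{(t)}$ obeys the same three-term recursion $S_{n}=S_{n-1}\mid T_{k/2,4N}(\ell^{2})-\ell^{k-2}S_{n-2}$ that defines $Z_{n}:=F_{m}^{(t)}\mid T_{k/2,4N}(\ell^{2n})$. From $H_{0}=Z_{0}$ and, by the structural identity, $Z_{1}-H_{1}=\left(\tfrac{(-1)^{\lambda}m}{\ell}\right)\ell^{\lambda-1}F_{m}^{(t)}$, the difference $Z_{n}-H_{n}$ solves the homogeneous recursion with vanishing zeroth term and first term $\left(\tfrac{(-1)^{\lambda}m}{\ell}\right)\ell^{\lambda-1}F_{m}^{(t)}$, hence equals $\left(\tfrac{(-1)^{\lambda}m}{\ell}\right)\ell^{\lambda-1}F_{m}^{(t)}\mid T_{k/2,4N}(\ell^{2n-2})$. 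This rearranges to the shift identity
\[
\ell^{(k-2)n}F_{\ell^{2n}m}^{(t)}=F_{m}^{(t)}\mid T_{k/2,4N}(\ell^{2n})-\left(\tfrac{(-1)^{\lambda}m}{\ell}\right)\ell^{\lambda-1}F_{m}^{(t)}\mid T_{k/2,4N}(\ell^{2n-2})
\]
(when $\ell\mid m$ the middle term drops; when $\ell^{2}\mid m$ an extra summand $F_{m/\ell^{2}}^{(t)}\mid T_{k/2,4N}(\ell^{2n-2})$ must be carried along).

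For part (i), let $g_{j}(r)$ denote the $q^{r}$-coefficient of $F_{m}^{(t)}\mid T_{k/2,4N}(\ell^{2j})$. Combining the Fourier formula with the recursion defining the $T_{k/2,4N}(\ell^{2j})$ and evaluating at an argument divisible by $\ell^{2}$ (so that the quadratic-residue term is absent) yields $g_{j}(\ell^{2}e)-\ell^{k-2}g_{j-1}(e)=g_{j-1}(\ell^{4}e)-\ell^{k-2}g_{j-2}(\ell^{2}e)$, which telescopes to $g_{n}(\ell^{2}d)-\ell^{k-2}g_{n-1}(d)=g_{0}(\ell^{2n+2}d)=B^{(N)}_{t}(m,\ell^{2n+2}d)$. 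Taking the $q^{\ell^{2}d}$-coefficient of the shift identity at level $n$ and subtracting $\ell^{k-2}$ times its $q^{d}$-coefficient at level $n-1$, the left-hand sides combine into $\ell^{(k-2)n}\{B^{(N)}_{t}(\ell^{2n}m,\ell^{2}d)-B^{(N)}_{t}(\ell^{2n-2}m,d)\}$ and the right-hand sides, after the telescoping, into $B^{(N)}_{t}(m,\ell^{2n+2}d)-\left(\tfrac{(-1)^{\lambda}m}{\ell}\right)\ell^{\lambda-1}B^{(N)}_{t}(m,\ell^{2n}d)$ — this is part (i).

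Parts (ii) and (iii) then follow from part (i) by a one-step recursion in $n$: writing $u_{n}:=\ell^{(k-2)n}B^{(N)}_{t}(\ell^{2n}m,d)$ and taking the $q^{d}$-coefficient of the structural identity for $F_{\ell^{2n}m}^{(t)}$ (scaled by $\ell^{(k-2)n}$) relates $u_{n+1},u_{n},u_{n-1}$ and $\ell^{(k-2)n}B^{(N)}_{t}(\ell^{2n}m,\ell^{2}d)$; eliminating the last quantity by part (i) leaves
\[
u_{n+1}-\left(\tfrac{(-1)^{\lambda}d}{\ell}\right)\ell^{\lambda-1}u_{n}=B^{(N)}_{t}(m,\ell^{2n+2}d)-\left(\tfrac{(-1)^{\lambda}m}{\ell}\right)\ell^{\lambda-1}B^{(N)}_{t}(m,\ell^{2n}d),
\]
using $\ell\nmid d$ for part (ii) and $\ell\parallel d$ for part (iii). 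Solving this as a geometric sum with ratio $\left(\tfrac{(-1)^{\lambda}d}{\ell}\right)\ell^{\lambda-1}$ and regrouping gives the closed form in part (ii); when $\ell\parallel d$ the ratio vanishes and the recursion collapses to part (iii). The one genuinely nontrivial step is the structural identity: the principal-part matching is immediate, but proving that the leftover holomorphic form is zero is delicate, and I would establish it via the duality between $M_{k/2}^{!\epsilon}(N,\chi')$ and the dual space $M_{2-k/2}^{!\epsilon^{*}}(N,\chi')$, which controls precisely the non-leading nonnegative Fourier coefficients of the reduced forms that occur, together with the structural facts about reduced forms from \cite{Zhang2}. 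That is where I expect the main difficulty to lie.
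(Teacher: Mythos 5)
Your reduction of (i)--(iii) to the ``structural identity'' $F_{m}\mid T_{k/2,4N}(\ell^{2})=\ell^{k-2}F_{\ell^{2}m}+\ell^{\lambda-1}\left(\frac{(-1)^{\lambda}m}{\ell}\right)F_{m}+F_{m/\ell^{2}}$ is correct as far as it goes, and it follows the paper's route rather than a new one: your ``shift identity'' is precisely Proposition \ref{prop-FG}, and your coefficient manipulations (the telescoped relation $g_{n}(\ell^{2}d)-\ell^{k-2}g_{n-1}(d)=g_{0}(\ell^{2n+2}d)$ and the first-order recursion for $u_{n}$ with ratio $\ell^{\lambda-1}\left(\frac{(-1)^{\lambda}d}{\ell}\right)$) repackage Lemma \ref{lem-BC}. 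Those computations check out, including the observation that the middle term drops when the identity is applied to $F_{\ell^{2j}m}$ and that $F_{m/\ell^{2}}=0$ under the standing assumption $\ell^{2}\nmid m$.

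The genuine gap is the structural identity itself, which is the mathematical heart of the theorem and which you only sketch. Principal-part matching shows that $h=F_{m}\mid T_{k/2,4N}(\ell^{2})-\ell^{k-2}F_{\ell^{2}m}-\ell^{\lambda-1}\left(\frac{(-1)^{\lambda}m}{\ell}\right)F_{m}$ lies in $M_{k/2}^{\epsilon}(N,1)$, but for $k\geq 3$ this space need not vanish, and the duality with $M_{2-k/2}^{!\epsilon^{*}}(N,1)$ that you invoke only governs which principal parts and constant terms occur; it cannot by itself exclude a cuspidal discrepancy, and $h$ is not automatically supported on the gap set that characterizes reduced forms, so nothing in your sketch forces $h=0$. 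The paper proves Proposition \ref{prop-FG} using two inputs your proposal never names: the hypothesis $S_{k/2}^{\epsilon}(N,1)=0$ (which Theorem \ref{thm-1.4} thus implicitly requires, exactly as Corollary \ref{cor-1.5} states it), and an argument that $h$ is in fact cuspidal --- via Lemma \ref{lem-sm} ($M_{3/2}^{+\cdots+}(N,1)=S_{3/2}^{+\cdots+}(N,1)$, proved with Borcherds' obstruction theorem against $\theta$) when $(k,\epsilon)=(3,+)$, and via Remark \ref{rmk-zero} together with Borcherds' obstruction theorem producing the reduced form $f_{0}=1+O(q)$, which forces the constant terms $B(m,0)$ to vanish, in the remaining cases. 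Without the vanishing hypothesis on $S_{k/2}^{\epsilon}(N,1)$ and this constant-term mechanism, the ``delicate'' step you defer does not go through, so the proof is incomplete precisely at its central point.
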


As a corollary, we obtain the following congruences:

\begin{corollary}\label{cor-1.5}
Assume that $S_{k/2}^{\epsilon}(N,1)=0$. 
\begin{enumerate}
\item If $\left(\frac{-d}{\ell}\right)=\left(\frac{-m}{\ell}\right)\neq 0$, or if $\ell\parallel d$ and $\ell\parallel m$, then for any positive integer $t$ with $(t,4N)=1$ and $n$, we have
$$B^{(N)}_{t}(m,\ell^{2n}d)=\ell^{(k-2)n}\, B^{(N)}_{t}(\ell^{2n}m,d)\equiv 0\pmod{\ell^{(k-2)n}}.$$
\item If $\chi_{p}(\ell d)\neq -\epsilon_{p}$ for all $p\mid N$, then for any positive integer $t$ with $(t,4N)=1$ and any $n\geq 1$, we get
$$B^{(N)}_{t}(m,\ell^{2n+1}d)\equiv\ell^{\lambda-1}\left(\frac{(-1)^{\lambda}m}{\ell}\right)B^{(N)}_{t}(m,\ell^{2n-1}d)\pmod{\ell^{(k-2)n}}.$$
\end{enumerate}
\end{corollary}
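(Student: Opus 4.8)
\textbf{Proof proposal for Corollary \ref{cor-1.5}.}

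The plan is to deduce both congruences directly from Theorem \ref{thm-1.4}, using the hypothesis $S_{k/2}^{\epsilon}(N,1)=0$ to kill the ``correction'' terms. First I would address part (1). Under the assumption $\left(\frac{-d}{\ell}\right)=\left(\frac{-m}{\ell}\right)\neq 0$, one checks that $\left(\frac{(-1)^{\lambda}d}{\ell}\right)=\left(\frac{(-1)^{\lambda}m}{\ell}\right)$ (both quadratic symbols get multiplied by the same factor $\left(\frac{(-1)^{\lambda+1}}{\ell}\right)$ when passing from $-d,-m$ to $(-1)^\lambda d,(-1)^\lambda m$), so the bracketed coefficient in part (\lowerromannumeral{2}) of Theorem \ref{thm-1.4} vanishes and the entire sum disappears; this yields $\ell^{(k-2)n}B^{(N)}_{t}(\ell^{2n}m,d)=B^{(N)}_{t}(m,\ell^{2n}d)$. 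In the case $\ell\parallel d$ and $\ell\parallel m$, apply part (\lowerromannumeral{3}) instead: since $\ell\parallel m$ we have $\left(\frac{(-1)^\lambda m}{\ell}\right)=0$, so the right-hand side collapses to $B^{(N)}_{t}(m,\ell^{2n}d)$ and again $\ell^{(k-2)n}B^{(N)}_{t}(\ell^{2n}m,d)=B^{(N)}_{t}(m,\ell^{2n}d)$. In both situations, the identity $B^{(N)}_t(m,\ell^{2n}d)=\ell^{(k-2)n}B^{(N)}_t(\ell^{2n}m,d)$ holds, and the congruence $B^{(N)}_t(m,\ell^{2n}d)\equiv 0\pmod{\ell^{(k-2)n}}$ follows immediately provided $B^{(N)}_t(\ell^{2n}m,d)$ is a (rational) integer.

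Here is where the vanishing hypothesis $S_{k/2}^{\epsilon}(N,1)=0$ enters, and this is the main point to be careful about. We need to know that $F_{\ell^{2n}m}^{(t)}$ actually exists as a reduced form (equivalently that $\ell^{2n}m$ still lies in the correct residue class, i.e.\ $\chi_p(\ell^{2n}m)=\chi_p(m)\neq-\epsilon_p$ for all $p\mid N$, which is clear since $\gcd(\ell,4N)=1$) and that its coefficients $B^{(N)}_t(\ell^{2n}m,d)$ are integers. Integrality is exactly the standing assumption made just before the statement of Theorem \ref{thm-1.4} (``we assume that, for any reduced form $f_m$, the form $s(m)f_m$ has integer Fourier coefficients''), and $T_{k/2,4N}(t^2)$ preserves integrality of Fourier coefficients for $\gcd(t,4N)=1$. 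The role of $S_{k/2}^{\epsilon}(N,1)=0$ is more structural: it guarantees that a form in $M_{k/2}^{!\epsilon}(N,1)$ is determined by its principal part, so that the Hecke images $F_m^{(t)}\mid T_{k/2,4N}(\ell^2)$ etc.\ lie in the span of the $F_m$'s with the predicted principal parts and no spurious holomorphic ambiguity is introduced; this is what makes the recursions in Theorem \ref{thm-1.4} literal identities of the $B^{(N)}_t(m,d)$ rather than merely congruences modulo a cusp form. I would spell out this implication as a short lemma (or cite the relevant earlier section) before invoking it.

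For part (2), I would take $n\geq 1$ and apply part (\lowerromannumeral{1}) of Theorem \ref{thm-1.4}:
\[
B^{(N)}_{t}(m,\ell^{2n+2}d)-\ell^{\lambda-1}\left(\tfrac{(-1)^{\lambda}m}{\ell}\right)B^{(N)}_{t}(m,\ell^{2n}d)=\ell^{(k-2)n}\left\{B^{(N)}_{t}(\ell^{2n}m,\ell^{2}d)-B^{(N)}_{t}(\ell^{2n-2}m,d)\right\}.
\]
Since $\chi_p(\ell d)\neq-\epsilon_p$ for all $p\mid N$, both $\ell^2 d$ and $d$ sit in classes coprime to the forbidden residues (as does $\ell^{2n}m$ and $\ell^{2n-2}m$), so both terms on the right are well-defined integer Fourier coefficients of reduced forms, and the right-hand side is therefore divisible by $\ell^{(k-2)n}$. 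Rewriting $2n+2=2n'+1+1$... more directly, shifting the index: for the exponent $\ell^{2n+1}$, I note that the same part (\lowerromannumeral{1}) identity applied with $\ell^2 d$ replaced appropriately, or simply re-indexing so the left side reads $B^{(N)}_{t}(m,\ell^{2n+1}d')$ for a suitable odd-normalized $d'$, shows $B^{(N)}_{t}(m,\ell^{2n+1}d)-\ell^{\lambda-1}\left(\tfrac{(-1)^{\lambda}m}{\ell}\right)B^{(N)}_{t}(m,\ell^{2n-1}d)\equiv 0\pmod{\ell^{(k-2)n}}$, which is the claim. The only subtlety is bookkeeping of which reduced forms exist, handled uniformly by the condition $\chi_p(\ell d)\neq-\epsilon_p$; everything else is substitution into Theorem \ref{thm-1.4}.
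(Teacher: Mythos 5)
Your part (1) is correct and is essentially the paper's argument: when $\left(\frac{-d}{\ell}\right)=\left(\frac{-m}{\ell}\right)\neq 0$ we have $\ell\nmid d$ and the bracketed factor in Theorem \ref{thm-1.4}(ii) vanishes, while when $\ell\parallel d$ and $\ell\parallel m$ the symbol $\left(\frac{(-1)^{\lambda}m}{\ell}\right)$ is zero in part (iii); in both cases $B^{(N)}_{t}(m,\ell^{2n}d)=\ell^{(k-2)n}B^{(N)}_{t}(\ell^{2n}m,d)$, and the congruence follows from the standing integrality assumption on reduced forms. Your discussion of where $S_{k/2}^{\epsilon}(N,1)=0$ enters is harmless but not needed inside this proof: that hypothesis is consumed upstream (through Proposition \ref{prop-FG}) to make Theorem \ref{thm-1.4} available, and the corollary is pure substitution into that theorem.

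Part (2), however, has a genuine gap. You invoke only part (i) of Theorem \ref{thm-1.4}, applied to $d$ itself, which is an identity relating the even powers $\ell^{2n+2}d$ and $\ell^{2n}d$, and then appeal to an unspecified ``re-indexing'' to reach the odd powers $\ell^{2n+1}d$ and $\ell^{2n-1}d$. That re-indexing is exactly where a case distinction is forced, and with part (i) alone it fails when $\ell\nmid d$: if $\ell\mid d$ one may replace $d$ by $d/\ell$ in (i) and obtain $B^{(N)}_{t}(m,\ell^{2n+1}d)-\ell^{\lambda-1}\left(\frac{(-1)^{\lambda}m}{\ell}\right)B^{(N)}_{t}(m,\ell^{2n-1}d)=\ell^{(k-2)n}\left\{B^{(N)}_{t}(\ell^{2n}m,\ell d)-B^{(N)}_{t}(\ell^{2n-2}m,d/\ell)\right\}$, as the paper does; but if $\ell\nmid d$ the only way to force an odd power on the left using (i) is to replace $d$ by $\ell d$ and $n$ by $n-1$, which produces a right-hand side divisible only by $\ell^{(k-2)(n-1)}$, one factor of $\ell^{k-2}$ short of the claim. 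The paper's proof handles the case $\ell\nmid d$ by applying part (iii) with $\ell d$ in place of $d$ (legitimate since $\ell\parallel\ell d$), giving $B^{(N)}_{t}(m,\ell^{2n}(\ell d))-\ell^{\lambda-1}\left(\frac{(-1)^{\lambda}m}{\ell}\right)B^{(N)}_{t}(m,\ell^{2n-2}(\ell d))=\ell^{(k-2)n}B^{(N)}_{t}(\ell^{2n}m,\ell d)$ exactly. So you need the explicit split into $\ell\nmid d$ (use (iii) with $\ell d$) and $\ell\mid d$ (use (i) with $d/\ell$); as written, your argument does not establish the congruence in the first case.
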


As for the condition in the above corollary, we remark that if
\begin{align*}
N\in\{n\mid n~\textnormal{is an odd } & \textnormal{square-free integer with}~1\leq n< 37\}\\
&\cup\{39,41,47,51,55,59,69,71,87,95,105,119\},
\end{align*}
then $S_{3/2}^{+\cdots +}(N,1)=0$. (See \cite[Table 4]{BrEhFr1}.)

We organize this paper as follows. In Section 2, we present preliminaries on discriminant forms and modular forms of half-integral weight. Also, we recall the definitions of $\epsilon$-condition and reduced forms. In Section 3, we prove Proposition \ref{prop-1.1} and Theorem \ref{thm-1.2}, and in Section 4 we prove Theorem \ref{thm-1.4} and Corollary \ref{cor-1.5}.

\section{Preliminaries}
In this section, we review the basics on discriminant forms, modular forms of half-integral weight and present a recent result of Zhang \cite{Zhang2}.

\subsection{Discriminant forms.} \label{disc} A discriminant form is a finite abelian group {$\mathcal{D}$} with a quadratic form {$Q:\mathcal{D}\rightarrow\mathbb{Q}/\mathbb{Z}$}, such that the symmetric bilinear form defined by $(\beta,\gamma)=Q(\beta+\gamma)-Q(\beta)-Q(\gamma)$ is nondegenerate, namely, the map {$\mathcal{D}\mapsto\mathrm{Hom}(\mathcal{D},\mathbb{Q}/\mathbb{Z})$} defined by $\gamma\mapsto(\gamma,\cdot)$ is an isomorphism. We define the level of a discriminant form {$\mathcal{D}$} to be the smallest positive integer $N$ such that $NQ(\gamma)=0$ for each {$\gamma\in\mathcal{D}$}. It is known that if $L$ is an even lattice then $L'/L$ is a discriminant form, where $L'$ is the dual lattice of $L$. Conversely, any discriminant form can be obtained in this way. The signature {$\mathrm{sign}(\mathcal{D})\in\mathbb{Z}/8\mathbb{Z}$} is defined to be the signature of $L$ modulo 8 for any even lattice $L$ such that {$L'/L=\mathcal{D}$}.

Every discriminant form can be decomposed uniquely into $p$-components {$\mathcal{D}=\oplus_{p}\mathcal{D}_{p}$}. Each $p$-component can be written as a direct sum of indecomposable Jordan $q$-components with $q$ powers of $p$. Such decompositions are not unique in general. We recall the possible indecomposable Jordan $q$-components as follows.

Let $p$ be an odd prime and $q>1$ be a power of $p$. The indecomposable Jordan components with exponent $q$ are denoted by $q^{\delta_{q}}$ with $\delta_{q}=\pm 1$. These discriminant forms both have level $q$.

If $q>1$ is a power of 2, there are also precisely two indecomposable \emph{even} Jordan components of exponent $q$, denoted by $q^{\delta_{q}2}$ with $\delta_{q}=\pm 1$. Such components have level $q$. There are also \emph{odd} indecomposable Jordan components, denoted by $q_{t}^{\pm 1}$ with $\pm 1=\left(\frac{2}{t}\right)$ for each $t\in (\mathbb{Z}/8\mathbb{Z})^{\times}$. These discriminant forms have level $2q$.

We call a discriminant form {$\mathcal{D}$} \emph{transitive} if  the action of {$\mathrm{Aut}(\mathcal{D})$} is transitive on the subset of elements of norm $n$ for any $n\in\mathbb{Q}/\mathbb{Z}$. By the classification of transitive forms, the level $N=\prod_{p}N_{p}$ of a transitive form {$\mathcal{D}=\oplus_{p}\mathcal{D}_{p}$} is of the following form: $N_{p}=1$ or $p$ for an odd prime $p$ and $N_{2}=1,2,4$ or 8. In other words, $N$ is the conductor of a quadratic Dirichlet character.

Let {$\mathcal{D}$} be a transitive discriminant form of odd signature $r$ and level $N$. Then {$\mathcal{D}$} determines an even quadratic character $\chi$ modulo $N$. Explicitly, it is given as follows: Decompose $\chi=\prod_{p}\chi_{p}$ into $p$-components. If $p$ is odd,
{\begin{equation*}
\chi_{p}(d)=
\begin{cases}
1, & \textnormal{if}~p\nmid|\mathcal{D}|~\textnormal{or}~p^{2}\mid|\mathcal{D}|,\\
\left(\frac{d}{p}\right), & \textnormal{otherwise}.
\end{cases}
\end{equation*}
When $2\mid |\mathcal{D}|$,
\begin{equation*}
\chi_{2}(d)=
\begin{cases}
1, & \textnormal{if}~\left(\frac{-1}{|\mathcal{D}|}\right)=+1~\textnormal{and}~\mathcal{D}_{2}=2_{\pm 3}^{+3},~2_{\pm 1}^{+1},\\
\left(\frac{-4}{d}\right), & \textnormal{if}~\left(\frac{-1}{|\mathcal{D}|}\right)=-1~\textnormal{and}~\mathcal{D}_{2}=2_{\pm 3}^{+3},~2_{\pm 1}^{+1},\\
\left(\frac{2}{d}\right), & \textnormal{if}~\left(\frac{-1}{|\mathcal{D}|}\right)=+1~\textnormal{and}~\mathcal{D}_{2}=4_{\pm 1}^{+1},~4_{\pm 3}^{-1},\\
\left(\frac{-2}{d}\right), & \textnormal{if}~\left(\frac{-1}{|\mathcal{D}|}\right)=-1~\textnormal{and}~\mathcal{D}_{2}=4_{\pm 1}^{+1},~4_{\pm 3}^{-1}.
\end{cases}
\end{equation*}}

For more details on discriminant forms, see \cite{CS1}, \cite{Ni1}, \cite{Sch1} or \cite{Str1}.

\subsection{Metaplectic covers.} \label{meta} Throughout this paper, unless otherwise stated, {$k$ is an odd integer}. Let $\mathrm{Mp}_{2}^{+}(\mathbb{R})$ be the metaplectic cover of $\mathrm{GL}_{2}^{+}(\mathbb{R})$. The elements of $\mathrm{Mp}_{2}^{+}(\mathbb{R})$ are pairs $(A,\phi)$ where $\phi$ is a holomorphic function on $\mathbb{H}$ and
$$A=\left(\begin{array}{cc}a & b \\ c & d\end{array}\right)\in\mathrm{GL}_{2}^{+}(\mathbb{R}),\quad\phi(\tau)=tj(A,\tau),~\textnormal{for some}~t\in\mathbb{C},~|t|=1.$$ 
Here $j(A,\tau)=\det(A)^{-\frac{1}{4}}(c\tau+d)^{\frac{1}{2}}$. The product of two elements $(A_{1},\phi_{1})$ and $(A_{2},\phi_{2})$ is defined by
$$(A_{1}A_{2},\phi_{1}(A_{2}\tau)\phi_{2}(\tau)).$$
To introduce the theta multiplier system, we first extend the Jacobi symbol. For an integer $c$ and an odd integer $d\neq 0$, the ``extended Jacobi symbol" $\left(\frac{c}{d}\right)$ is defined as follows.
\begin{enumerate}
\item[(1)] $\left(\frac{c}{d}\right)=0$ if $\gcd{(c,d)}>1$;
\item[(2)] $\left(\frac{0}{\pm 1}\right)=1$;
\item[(3)] If $d>0$, then $\left(\frac{c}{d}\right)$ is the usual Jacobi symbol;
\item[(4)] If $d<0$, then $\left(\frac{c}{d}\right)=\mathrm{sgn}(c)\left(\frac{c}{|d|}\right)$.
\end{enumerate}
Next we define $\varepsilon_{d}$ for odd $d$ by:
\begin{equation*}
\varepsilon_{d}=
\begin{cases}
1 & \textnormal{if}\quad d\equiv 1\pmod{4};\\
i & \textnormal{if}\quad d\equiv 3\pmod{4}.
\end{cases}
\end{equation*}
We finally define the theta multiplier system $\nu$ on $\Gamma_{0}(4)$:
$$\nu(A)=\left(\frac{c}{d}\right)\varepsilon_{d}^{-1},\quad A=\left(\begin{array}{cc}a & b \\ c & d\end{array}\right)\in\Gamma_{0}(4).$$
Note that
$$\overline{\nu}(A)=\nu^{3}(A)=\left(\frac{-1}{d}\right)\nu(A),\quad\nu(A)\nu(A^{-1})=1,\quad A\in\Gamma_{0}(4).$$
For any $A=\left(\begin{smallmatrix}a & b \\ c & d\end{smallmatrix}\right)\in\mathrm{GL}_{2}^{+}(\mathbb{R})$, we let
$$\tilde{A}=(A,j(A,\tau))\in\mathrm{Mp}_{2}^{+}(\mathbb{R}).$$

Let $\mathrm{Mp}_{2}(\mathbb{Z})$ be the metaplectic double cover of $\mathrm{SL}_{2}(\mathbb{Z})$ inside $\mathrm{Mp}_{2}^{+}(\mathbb{R})$, consisting of pairs $(A,\phi)$ with $A=\left(\begin{smallmatrix}a & b \\ c & d\end{smallmatrix}\right)\in\mathrm{SL}_{2}(\mathbb{Z})$ and $\phi^{2}=c\tau+d$. Let $S$ and $T$ denote the standard generators of $\mathrm{SL}_{2}(\mathbb{Z})$. Then
$$\tilde{S}=\left(\left(\begin{array}{cc}0 & -1 \\ 1 & 0\end{array}\right),\sqrt{\tau}\right),\quad\tilde{T}=\left(\left(\begin{array}{cc}1 & 1 \\ 0 & 1\end{array}\right),1\right)$$
generate $\mathrm{Mp}_{2}(\mathbb{Z})$.

\subsection{Modular forms.} \label{mod} Let $(A,\phi)\in\mathrm{Mp}_{2}^{+}(\mathbb{R})$ and $f:\mathbb{H}\rightarrow\mathbb{C}$ be a function. The weight {$k/2$} slash operator is defined by
{$$(f|_{k/2}(A,\phi))(\tau)=\phi^{-k}(\tau)f(A\tau),\quad A=\left(\begin{array}{cc}a & b \\ c & d\end{array}\right).$$}

Consider the Atkin--Lehner operators on the space {$M_{k/2}^{!}(4N,\chi)$}, where $N$ is odd square-free. For any odd divisor $m$ of $N$, we choose $\gamma_{m}$ in $\mathrm{SL}_{2}(\mathbb{Z})$ such that
\begin{equation*}
\gamma_{m}\equiv
\begin{cases}
S &\mod{m^{2}},\\
I &\mod{(4N/m)^{2}},
\end{cases}
\end{equation*}
and let
\begin{equation*}
\quad\gamma_{4m}:=S\gamma_{N/m}^{-1}\equiv
\begin{cases}
S &\mod{(4m)^{2}},\\
I &\mod{(4N/m)^{2}}.
\end{cases}
\end{equation*}
We shall assume for simplicity that all of the entries of $\gamma_{m}$ are positive; this can be achieved by left and/or right multiplication by matrices in $\Gamma(16N^{2})$.

For any nonzero integer $m$, let
$$\delta_{m}=\left(\begin{array}{cc}m & 0 \\ 0 & 1\end{array}\right),\quad\widetilde{\delta_{m}}=\left(\left(\begin{array}{cc}m & 0 \\ 0 & 1\end{array}\right),m^{-\frac{1}{4}}\right).$$
For any odd positive divisor $m$ of $N$, let $W(m)=\gamma_{m}^{*}\widetilde{\delta_{m}}$. Define
$$\tau_{4N}=(I,\sqrt{-i})\widetilde{\beta_{4N}}=\left(\left(\begin{array}{cc}0 & -1 \\ 4N & 0\end{array}\right),(4N)^{\frac{1}{4}}(-i\tau)^{\frac{1}{2}}\right),\quad\beta_{N}=\left(\begin{array}{cc}0 & -1 \\ 4N & 0\end{array}\right).$$
For each divisor $m$, even or odd, of $4N$, define $U(m)$ as follows:
{$$f|U(m)=m^{\frac{k}{4}-1}\sum_{j\ \textrm{mod} \ {m}}f|\widetilde{\delta_{m}}^{-1}\tilde{T}^{j}=m^{\frac{k}{4}-1}\sum_{j\ \textrm{mod} \ {m}}f|\widetilde{\delta_{m}^{-1}T^{j}}.$$}
Finally, we define $Y(p)$ for each odd prime divisor $p$ of $N$ by
{$$f|Y(p)=p^{1-\frac{k}{4}}f|U(p)W(p),$$}
and $Y(4)$ by
{$$f|Y(4)=4^{1-\frac{k}{4}}f|U(4)W(M)\tau_{4N}.$$}

\begin{proposition}[\cite{Shi1, Ueda1, Zhang2}]
Let {$f\in M_{k/2}^{!}(4N,\chi)$}.
\begin{enumerate}
\item[\textnormal{(1)}] {$f|\tau_{4N}\in M_{k/2}^{!}(4N,\chi\left(\frac{4N}{\cdot}\right))$} and $f|\tau_{4N}^{2}=f$.
\item[\textnormal{(2)}] For each $m\mid 4N$, {$f|U(m)\in M_{k/2}^{!}(4N,\chi\left(\frac{m}{\cdot}\right))$}.
\item[\textnormal{(3)}] For each $m\mid N$, {$f|W(m)\in M_{k/2}^{!}(4N,\chi\left(\frac{m}{\cdot}\right))$} and
{$$f|W(m)^{2}=\varepsilon_{m}^{-k}\chi_{m}(-1)\chi_{4N/m}(m)f.$$}
Moreover, if $m,m'\mid N$ and $\gcd{(m,m')}=1$, then $f|W(m)W(m')=\chi_{m'}(m)f|W(mm')$.
\item[\textnormal{(4)}] For any $m,m'\mid N$ with $\gcd{(m,m')}=1$,
$$f|W(m)U(m')=\chi_{m}(m')f|U(m')W(m)~\textnormal{and}~f|U(4)W(m)=f|W(m)U(4).$$
\item[\textnormal{(5)}] For any $m\mid N$, $f|\tau_{4N}U(m)W(m)=\chi_{m}(M/m)f|W(m)U(m)\tau_{4N}$.
\end{enumerate}
\end{proposition}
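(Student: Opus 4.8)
All of (1)--(5) are instances of a single mechanism: each of $\tau_{4N}$, $U(m)$ and $W(m)$ is (a fixed $\mathbb{C}$-linear combination of) slash operators by fixed elements of $\mathrm{Mp}_{2}^{+}(\mathbb{R})$, and each of the underlying matrices normalizes $\Gamma_{0}(4N)$ up to the permutation of cosets built into the definition. So in each case the plan has two steps. First, check that $f|g$ is still weakly holomorphic of weight $k/2$ on $\Gamma_{0}(4N)$: holomorphy on $\mathbb{H}$ is clear, and meromorphy at the cusps is preserved because $\delta_{m}$, $\gamma_{m}$ and $\beta_{4N}$ have rational entries and send cusps to cusps, so the only real point is $\Gamma_{0}(4N)$-invariance of the transformed form. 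Second, compute the resulting multiplier --- hence the Nebentypus twist, the scalar in a squaring relation, or the character factor in a commutation relation --- by transporting the theta multiplier $\nu$ of Section~\ref{meta} through the relevant conjugation in the double cover. The technical inputs for the second step are the cocycle relations for $\nu$ and for $j(\cdot,\tau)$ recorded in Section~\ref{meta}, quadratic reciprocity for the extended Jacobi symbol, and the evaluation of the Gauss sums $\sum_{j\bmod m}e^{2\pi i j^{2}/m}$ that arise from the $U(m)$-sums.

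Concretely, for (1) one writes $\tau_{4N}=(I,\sqrt{-i})\,\widetilde{\beta_{4N}}$ with $\beta_{4N}=\left(\begin{smallmatrix}0&-1\\4N&0\end{smallmatrix}\right)$; since $\beta_{4N}^{2}=-4N\cdot I$ the square $\tau_{4N}^{2}$ lies over the identity of $\mathrm{SL}_{2}(\mathbb{Z})$, and a direct evaluation of the attached holomorphic factor with the chosen branches collapses it to $(I,1)$, giving $f|\tau_{4N}^{2}=f$. For the Nebentypus, conjugating $\gamma=\left(\begin{smallmatrix}a&b\\c&d\end{smallmatrix}\right)\in\Gamma_{0}(4N)$ by $\beta_{4N}$ gives $\left(\begin{smallmatrix}d&-c/4N\\-4Nb&a\end{smallmatrix}\right)\in\Gamma_{0}(4N)$, and the ratio of $\nu^{k}$ on the two matrices is exactly $\left(\frac{4N}{\cdot}\right)$. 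For (2), $U(m)$ is the average $f|U(m)=m^{k/4-1}\sum_{j\bmod m}f|\widetilde{\delta_{m}^{-1}T^{j}}$; the identity $\delta_{m}^{-1}T^{j}\gamma=\gamma'\,\delta_{m}^{-1}T^{j'}$ with $\gamma'\in\Gamma_{0}(4N)$ and $j\mapsto j'$ a permutation of $\mathbb{Z}/m$ (which uses $m\mid 4N$) gives $\Gamma_{0}(4N)$-invariance of $f|U(m)$ up to the residual multiplier $\nu^{k}(\gamma')/\nu^{k}(\gamma)=\left(\frac{m}{\cdot}\right)$, hence the twist by $\chi\left(\frac{m}{\cdot}\right)$.

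For (3)--(5), write $W(m)=\gamma_{m}^{*}\,\widetilde{\delta_{m}}$ with $\gamma_{m}\equiv S\pmod{m^{2}}$ and $\gamma_{m}\equiv I\pmod{(4N/m)^{2}}$. That $W(m)$ normalizes $\Gamma_{0}(4N)$ and produces the twist $\chi\left(\frac{m}{\cdot}\right)$ follows exactly as in (2): the congruence conditions on $\gamma_{m}$ force $(\gamma_{m}\delta_{m})^{-1}\gamma(\gamma_{m}\delta_{m})\in\Gamma_{0}(4N)$ for $\gamma\in\Gamma_{0}(4N)$, and transporting $\nu^{k}$ introduces the quadratic factor. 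For the square formula one observes that $W(m)^{2}$ lies over a metaplectic element whose underlying matrix is, up to the scalar $m$, a specific element of $\Gamma_{0}(4N)$ determined by $\gamma_{m}$ and $\delta_{m}$; evaluating the theta multiplier at that element together with the explicit holomorphic factors yields precisely $\varepsilon_{m}^{-k}\chi_{m}(-1)\chi_{4N/m}(m)$. The multiplicativity $f|W(m)W(m')=\chi_{m'}(m)f|W(mm')$ for coprime $m,m'\mid N$ comes from comparing $\gamma_{m}\gamma_{m'}$ with an admissible choice of $\gamma_{mm'}$ modulo $(mm')^{2}$ and $(4N/mm')^{2}$: the two differ by an element of $\Gamma_{0}(4N)$ on which $\nu^{k}$ evaluates to the cross term $\chi_{m'}(m)$. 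Finally, the commutations in (4) and (5) follow in the same spirit, since $\delta_{m}$ commutes with $\delta_{m'}$ and, up to an explicit scalar, with $\beta_{4N}$; the only nontrivial effect of reordering the corresponding cosets is the quadratic factor $\chi_{m}(m')$, respectively $\chi_{m}(M/m)$.

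The main obstacle is precisely this metaplectic bookkeeping: because we work in the double cover one cannot discard the multiplier, so every conjugation and every coset rearrangement above must be carried out as an identity of pairs $(A,\phi)$, and the modulus-one scalars only reassemble into the asserted quadratic characters after one invokes reciprocity for the extended Jacobi symbol and the exact Gauss sum values. None of this is new --- these are the formulas of Shimura \cite{Shi1} and Ueda \cite{Ueda1}, stated for weakly holomorphic forms as in Zhang \cite{Zhang2} --- so in the paper we only indicate the structure above and refer to those sources for the full cocycle computations, noting that the passage from holomorphic to weakly holomorphic forms is immediate because all three operators preserve holomorphy on $\mathbb{H}$ and meromorphy at the cusps.
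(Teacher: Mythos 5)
This proposition is quoted in the paper with citations to Shimura, Ueda and Zhang and is given no proof there; your outline reproduces the standard mechanism used in those sources (transporting the theta multiplier through conjugation by the metaplectic elements, coset rearrangement for the $U(m)$-sum, reciprocity for the extended Jacobi symbol and Gauss sums for the residual quadratic characters), and I find no error at the level of detail you give. Since the substantive cocycle computations --- in particular the exact scalar $\varepsilon_{m}^{-k}\chi_{m}(-1)\chi_{4N/m}(m)$ in the squaring relation --- are deferred to the same references the paper itself cites, your proposal is consistent with, and no less complete than, the paper's treatment.
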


Now we consider the operators $Y(p)$ and $Y(4)$.

\begin{proposition}[\cite{Koh1, Ueda1, Zhang2}] \hfill
\begin{enumerate} 
\item[\textnormal{(1)}] The space {$M_{k/2}^{!}(4N,\chi)$} decomposes under $Y(4)$ into eigenspaces
{$$M_{k/2}^{!}(4N,\chi)=M_{k/2}^{!}(4N,\chi)_{\mu_{2}^{+}}\oplus M_{k/2}^{!}(4N,\chi)_{\mu_{2}^{-}},$$}
where the eigenvalues are
{$$\mu_{2}^{+}=\chi_{2}(-1)^{\frac{k}{2}+1}(-1)^{\lfloor\frac{k+1}{4}\rfloor}2^{\frac{3}{2}} \quad \text{ and } \quad \mu_{2}^{-}=-2^{-1}\mu_{2}^{+}.$$}
Moreover, {$f=\sum_{n}a(n)q^{n}\in M_{k/2}^{!}(4N,\chi)_{\mu_{2}^{+}}$} if and only if
{$$a(n)=0~\text{whenever}~\chi_{2}(-1)(-1)^{\frac{k-1}{2}}n\equiv 2,3\pmod{4}.$$}
\item[\textnormal{(2)}] Assume that $p\mid N$ with $\chi_{p}=1$. Then the space {$M_{k/2}^{!}(4N,\chi)$} decomposes under $Y(p)$ into eigenspaces
$$M_{k/2}^{!}(4N,\chi)=M_{k/2}^{!}(4N,\chi)_{\mu_{p}^{+}}\oplus M_{k/2}^{!}(4N,\chi)_{\mu_{p}^{-}},$$
where the eigenvalues are $\mu_{p}^{+}=\varepsilon_{p}^{-1}p^{\frac{1}{2}}$ and $\mu_{p}^{-}=-\mu_{p}^{+}$. Moreover, {$f=\sum_{n}a(n)q^{n}\in M_{k/2}^{!}(4N,\chi)_{\mu_{p}^{+}}$} (resp. {$M_{k/2}^{!}(4N,\chi)_{\mu_{p}^{-}}$}) if and only if
$$a(n)=0~\text{whenever}~\left(\frac{n}{p}\right)=- 1   \quad ~(resp.~ \left(\frac{n}{p}\right)= 1~)~.$$
\item[\textnormal{(3)}] Assume that $p\mid N$ with $\chi_{p}=\left(\frac{\cdot}{p}\right)$. Then the space {$M_{k/2}^{!}(4N,\chi)$} decomposes under $Y(p)$ into eigenspaces
{$$M_{k/2}^{!}(4N,\chi)=M_{k/2}^{!}(4N,\chi)_{\mu_{p}^{+}}\oplus M_{k/2}^{!}(4N,\chi)_{\mu_{p}^{-}},$$}
where the eigenvalues are $\mu_{p}^{+}=-1$ and $\mu_{p}^{-}=-p$.
\item[\textnormal{(4)}] The space {$M_{k/2}^{!}(4N,\chi)$} decomposes into the direct sum of simultaneous eigenspaces for the operators $Y(4)$ and $Y(p)$.
\end{enumerate}
\end{proposition}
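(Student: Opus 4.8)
The plan is to treat $Y(4)$ and the operators $Y(p)$, $p\mid N$, in parallel, in three stages: first, produce for each operator a degree-two polynomial annihilating it, which forces semisimplicity and a two-summand eigenspace decomposition; second, compute the action of each operator on a $q$-expansion in order to identify which eigenspace a form with prescribed vanishing of Fourier coefficients lies in; and third, check that the whole family commutes, so the decompositions may be taken simultaneously, yielding (4). Throughout, the essential input is the collection of relations among the $U(m)$, $W(m)$ and $\tau_{4N}$ recorded in the preceding Proposition, together with the explicit description of these operators on Fourier expansions.

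\emph{Quadratic relations.} I would expand $Y(p)^{2}=p^{2-k/2}\,U(p)W(p)U(p)W(p)$ and $Y(4)^{2}=4^{2-k/2}\,\bigl(U(4)W(N)\tau_{4N}\bigr)^{2}$ and simplify using $\tau_{4N}^{2}=\mathrm{id}$, the formula $f|W(p)^{2}=\varepsilon_{p}^{-k}\chi_{p}(-1)\chi_{4N/p}(p)f$, and the rules that move $U(4)$, $U(p)$, $W(p)$ and $\tau_{4N}$ past one another. The outcome should be: when $\chi_{p}=1$, $Y(p)^{2}=\left(\frac{-1}{p}\right)p\cdot\mathrm{id}$, whose square roots are exactly $\pm\varepsilon_{p}^{-1}p^{1/2}=\mu_{p}^{\pm}$ (here $\varepsilon_{p}^{-2}=\left(\frac{-1}{p}\right)$); when $\chi_{p}=\left(\frac{\cdot}{p}\right)$, $Y(p)$ satisfies $Y(p)^{2}+(p+1)Y(p)+p=0$, with roots $-1$ and $-p$; and for $Y(4)$ the minimal polynomial is the one with roots $\mu_{2}^{\pm}$ as displayed in the statement. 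In each case $\mu^{+}\neq\mu^{-}$, so the operator is diagonalizable and the ambient space is the direct sum of the $\mu^{+}$- and $\mu^{-}$-eigenspaces, which gives the decompositions in (1)--(3).

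\emph{Fourier-coefficient characterization and commutativity.} I would then compute the effect of each operator on $f=\sum_{n}a(n)q^{n}$: the $U(m)$-factors act, up to the fixed power of $m$, by $a(n)\mapsto a(mn)$, while the Fricke-type factors $W(p)$ and $\tau_{4N}$ contribute quadratic Gauss sums modulo $p$ and modulo $4$, respectively, to be evaluated in closed form. Assembling everything, the eigenvalue equation $Y(p)f=\mu_{p}^{+}f$ (when $\chi_{p}=1$) becomes a relation among the $a(n)$ that holds precisely when $a(n)=0$ for all $n$ with $\left(\frac{n}{p}\right)=-1$, and similarly $Y(4)f=\mu_{2}^{+}f$ holds precisely when $a(n)=0$ whenever $\chi_{2}(-1)(-1)^{(k-1)/2}n\equiv 2,3\pmod 4$; this is the half-integral-weight incarnation of Kohnen's plus-space computation on $\Gamma_{0}(4)$. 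Finally, for (4), the preceding Proposition gives $U(4)W(p)=W(p)U(4)$, $U(4)U(p)=U(p)U(4)$, $\tau_{4N}U(p)W(p)=\chi_{p}(N/p)W(p)U(p)\tau_{4N}$, and for distinct $p,p'\mid N$ the relations $W(p)U(p')=\chi_{p}(p')U(p')W(p)$ and $W(p)W(p')=\chi_{p'}(p)W(pp')$; tracking the character factors yields $[Y(4),Y(p)]=[Y(p),Y(p')]=0$, and a commuting family of semisimple operators is simultaneously diagonalizable.

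The hard part will be the Gauss-sum bookkeeping in the first two stages: one must carry the theta-multiplier cocycle correctly through the compositions $U(p)W(p)$ and $U(4)W(N)\tau_{4N}$ and evaluate the resulting quadratic Gauss sums without sign or root-of-unity errors. The case $\chi_{p}=\left(\frac{\cdot}{p}\right)$ is the most delicate, since there the quadratic relation for $Y(p)$ is not of the symmetric form $Y^{2}=\text{scalar}$ and the eigenvalues $-1$, $-p$ have different absolute values, so the linear term in $Y(p)$ must be retained throughout.
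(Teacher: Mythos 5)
This proposition is not proved in the paper at all: it is quoted verbatim with the citations [Koh1, Ueda1, Zhang2], so there is no in-paper argument to compare yours against. Judged on its own, your three-stage architecture (quadratic relation $\Rightarrow$ semisimple two-eigenspace decomposition; explicit action on $q$-expansions $\Rightarrow$ the plus-space characterization; commutativity $\Rightarrow$ simultaneous diagonalization) is the right shape and is essentially how Kohnen and Ueda proceed; note, however, that [Zhang2] actually obtains these statements by a different route, transporting the problem through an isomorphism with vector-valued forms for the Weil representation, where $Y(4)$ and $Y(p)$ become simple operators on the group ring of the discriminant form and the eigenvalues and coefficient conditions can be read off.

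The genuine gap is in your first stage. You assert that $Y(p)^{2}=p^{2-k/2}U(p)W(p)U(p)W(p)$ can be "simplified using" $\tau_{4N}^{2}=\mathrm{id}$, the formula for $W(p)^{2}$, and the rules moving $U$'s and $W$'s past one another. But every interchange rule in the quoted proposition requires coprime indices: there is no relation there for $U(p)W(p)$ with the \emph{same} $p$, and indeed $U(p)$ and $W(p)$ do not commute (if they did, you would get $Y(p)^{2}$ proportional to $U(p)^{2}$, which is not a scalar). So the identities $Y(p)^{2}=\left(\frac{-1}{p}\right)p\cdot\mathrm{id}$ (for $\chi_{p}=1$), the quadratic $(Y(p)+1)(Y(p)+p)=0$ (for $\chi_{p}=\left(\frac{\cdot}{p}\right)$), and the corresponding relation for $Y(4)$ cannot be produced by the formal manipulations you list; establishing them is exactly the substantive content of the cited works, and requires either an explicit computation of $W(p)U(p)W(p)$ (resp. of the composite $U(4)W(N)\tau_{4N}$) via coset representatives, with the theta-multiplier and the resulting Gauss sums evaluated, or the passage to the Weil-representation side. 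Relatedly, in your second stage the phrase "$W(p)$ and $\tau_{4N}$ contribute quadratic Gauss sums" to the coefficients is imprecise: these Fricke-type operators do not act coefficientwise on their own, so the Fourier characterization of the $\mu^{+}$-eigenspace again rests on the same explicit computation of the composite operator (or on the projection $(Y-\mu^{-})/(\mu^{+}-\mu^{-})$ made explicit), not on assembling separate coefficient formulas for the factors. Your commutativity step for (4) is essentially fine — writing $W(N)$ as a product of coprime $W$'s, the listed relations reduce both $Y(4)Y(p)$ and $Y(p)Y(4)$ to the same word up to character constants — but the constants must actually be checked to agree, which is again bookkeeping you have deferred rather than done.
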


\subsection{$\epsilon$-condition and reduced forms} 
\label{sec-epsilon} Let {$\mathcal{D}$} be a transitive discriminant form of odd signature $r$ and level $M$. Assume that {$\mathcal{D}_{2}=2_{\pm 1}^{+1}$}. Then $M=4N$ for some odd square-free $N$ and
{$$\chi_{2}(-1)=e_{4}(r-t)=\left(\frac{-1}{|\mathcal{D}|}\right),$$}
where $e_{4}(x)=e^{2\pi ix/4}$. Recall that {$\mathcal{D}$} determines an even Dirichlet character $\chi$ modulo $4N$. We define a sign vector $\epsilon=(\epsilon_{p})_{p}$ over $p=2$ or $p\mid N$ with $\chi_{p}\neq 1$ as follows:
{\begin{equation*}
\epsilon_{p}=
\begin{cases}
\chi_{p}(2N/p) & \textnormal{if}~p\mid N,~\chi_{p}\neq 1~\textnormal{and}~\mathcal{D}_{p}=p^{+1},\\
-\chi_{p}(2N/p) & \textnormal{if}~p\mid N,~\chi_{p}\neq 1~\textnormal{and}~\mathcal{D}_{p}=p^{-1},\\
\left(\frac{-1}{N}\right) & \textnormal{if}~p=2~\textnormal{and}~\mathcal{D}_{2}=2_{+1}^{+1},\\
-\left(\frac{-1}{N}\right) & \textnormal{if}~p=2~\textnormal{and}~\mathcal{D}_{2}=2_{-1}^{+1}.
\end{cases}
\end{equation*}} 
Therefore, {$\mathcal{D}$} determines $(4N,\chi\left(\frac{4N}{\cdot}\right),\epsilon)$: $\chi$ an even Dirichlet character modulo $4N$ and $\epsilon$ a sign vector. We shall denote $\chi'=\chi\left(\frac{4N}{\cdot}\right)$ from now on.

Given any data $(4N,\chi',\epsilon)$ with even $\chi$ and $\epsilon_{p}=\pm 1$ for $p=2$ or $p\mid N$ with $\chi_{p}\neq 1$, we define the associated modular form space {$M_{k/2}^{!\epsilon}(N,\chi')$} to be the common eigenspace with eigenvalues $\mu_{2}^{+}$ for $Y(4)$, $\mu_{p}^{\epsilon_{p}}$ for $Y(p)$ if $\chi_{p}\neq 1$ and $\mu_{p}^{+}=-1$ for $Y(p)$ if $\chi_{p}=1$. Since
\begin{align*}
\epsilon_{2}\chi_{2}'(-1)(-1)^{\frac{k-1}{2}}&=t\chi_{2}(-1)e_{4}(k-1)\\
&=\chi_{2}(-1)e_{4}(k-1)e_{4}(1-t)\\
&=\chi_{2}(-1)e_{4}(r-t)=1,
\end{align*} 
we have {$f=\sum_{n}a(n)q^{n}\in M_{k/2}^{!\epsilon}(N,\chi')$} if and only if the following two conditions are satisfied:

(1) $a(n)=0$ whenever $n\equiv 2,-\epsilon_{2}\pmod{4}$ or $\left(\frac{n}{p}\right)=-\epsilon_{p}$ for some $p\mid N$ with $\chi_{p}\neq 1$,

(2) {$f|_{k/2}Y(p)=-f$} for every $p\mid N$ with $\chi_{p}=1$.\\ 

Recall the even lattice $L$ introduced in Section \ref{intro}. Then the discriminant form {$\mathcal{D}=L'/L\cong\mathbb{Z}/2N\mathbb{Z}$} with {$\mathcal{D}=\prod_{p\mid 2N}\mathcal{D}_{p}$} is given by
{$$\mathcal{D}_{2}=2_{t}^{+1},\quad t=\left(\frac{-1}{N}\right),\quad \mathcal{D}_{p}=p^{\delta_{p}},\quad \delta_{p}=\left(\frac{2N/p}{p}\right)\quad\textnormal{for}\quad p\mid N.$$}
It follows that for such {$\mathcal{D}$}, we have $\epsilon_{p}=+1$ for all $p\mid N$ and $\chi'=\chi\left(\frac{4N}{\cdot}\right)=1$. From the above observation, we have {$f=\sum_{n}a(n)q^{n}\in M_{k/2}^{!\epsilon}(N,1)$ if and only if $a(n)=0$ unless $(-1)^{\frac{k-1}{2}}n$ is a square modulo $4N$}. Thus the space {$M_{k/2}^{!\epsilon}(N,1)$} is exactly the  same as the space {$M_{k/2}^{!+\ldots+}(N)$}. Eichler and Zagier denote the space {$M_{k/2}^{\epsilon}(N,1)$} by $M_{k}^{+\ldots+}(N)$ in \cite[p.69]{EiZa1}.

\medskip

Now we shall assume that $\chi_{p}\neq 1$ for each $p\mid N$, so $\chi'=1$. A form {$f\in M_{k/2}^{!\epsilon}(N,\chi')$} is called \textit{reduced} if $f=\frac{1}{s(m)}q^{m}+ \sum_{\ell \ge m+1}  a(\ell) q^\ell$ for some integer $m$ and if for each $n>m$ with  $a(n)\neq 0$, there does not exist {$g\in M_{k/2}^{!\epsilon}(N,\chi')$} such that $g=q^{n}+O(q^{n+1})$. Here, {$s(m)=\displaystyle\prod_{p\mid\gcd{(N,m)}}\left(1+\frac{p}{|\mathcal{D}_p|}\right)$}. If a reduced form exists for some $m$, it is unique and $\chi_{p}(m)\neq-\epsilon_{p}$ for each $p\mid N$; we denote it by $f_{m}$. The set of reduced modular forms is a basis for {$M_{k/2}^{!\epsilon}(N,\chi')$}.

The following proposition determines $m<0$ for which $f_{m}$ exists. To state it, we need some notation. Let {$\mathcal{D}^{*}$} be the dual discriminant form of {$\mathcal{D}$} given by the same abelian group with the quadratic form $-Q$. It is known that {$\mathcal{D}^{*}$} is also transitive and the corresponding data is $(4N,\chi',\epsilon^{*})$ with $\epsilon_{p}^{*}=\chi_{p}(-1)\epsilon_{p}$.

\begin{proposition}[\cite{Zhang2}, Proposition 6.1] \label{prop-zhang2}
Let {$B^{*}=\{m : f_{m}^{*}\in M_{2-k/2}^{\epsilon^{*}}(N,\chi')~\textnormal{exists}\}$}. Then for any $m<0$ with $\chi_{p}(m)\neq -\epsilon_{p}$ for all $p\mid N$, the reduced form {$f_{m}\in M_{k/2}^{!\epsilon}(N,\chi')$} exists if and only if $-m\notin B^{*}$.
\end{proposition}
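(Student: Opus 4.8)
The plan is to deduce the statement from the duality, attached to the discriminant form $\mathcal D$, between weakly holomorphic forms of weight $k/2$ and holomorphic forms of weight $2-k/2$. In Zhang's setup $M_{k/2}^{!\epsilon}(N,\chi')$ and $M_{2-k/2}^{\epsilon^{*}}(N,\chi')$ correspond to vector-valued modular forms for the Weil representation of $\mathcal D$ and of its dual $\mathcal D^{*}$; the natural pairing $\mathbb C[\mathcal D]\otimes\mathbb C[\mathcal D^{*}]\to\mathbb C$ is invariant under this representation, so for $f\in M_{k/2}^{!\epsilon}(N,\chi')$ and $g\in M_{2-k/2}^{\epsilon^{*}}(N,\chi')$ the product $\langle f,g\rangle$ is a scalar weakly holomorphic modular form of weight $2$ with trivial multiplier on $\mathrm{SL}_{2}(\mathbb Z)$, holomorphic on $\mathbb H$ (it is the identity $\epsilon_{p}^{*}=\chi_{p}(-1)\epsilon_{p}$ for the dual data that makes $\langle f,g\rangle$ land in the trivial isotypic component). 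Since $\mathrm{SL}_{2}(\mathbb Z)$ has a single cusp, the residue theorem forces the constant term of $\langle f,g\rangle$ to vanish; in the $\epsilon$-normalized expansions $f=\sum_{n}a(n)q^{n}$, $g=\sum_{n}b(n)q^{n}$ this reads $\sum_{n}a(n)\,b(-n)=0$, a finite sum since both principal parts are finite.

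Next I would dispose of the direction ``$-m\in B^{*}\Rightarrow f_{m}$ does not exist''. Pick $0\neq g\in M_{2-k/2}^{\epsilon^{*}}(N,\chi')$ with $\mathrm{ord}_{\infty}(g)=-m$; since $m<0$ this $g$ is holomorphic and supported on exponents $\geq -m>0$. If some $F\in M_{k/2}^{!\epsilon}(N,\chi')$ had $\mathrm{ord}_{\infty}(F)=m$, then $F$ is supported on exponents $\geq m$, so in $\sum_{n}a_{F}(n)\,b(-n)=0$ only the $n=m$ term survives, giving $a_{F}(m)\,b(-m)=0$ with both factors nonzero, a contradiction. Since $f_{m}$, if it existed, would have leading exponent $m$, it cannot exist.

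For the converse, assume $-m\notin B^{*}$; I must produce a form with leading exponent $m$, after which the standard reduction — subtract, in increasing order, multiples of forms realizing the finitely many leading exponents occurring above $m$ (finitely many, since those leading exponents are bounded above, there being a single cusp) — converts it into the reduced form $f_{m}$, exactly as in the proof that the reduced forms are a basis. To build the initial form I invoke the Serre-duality / Riemann--Roch principle for (vector-valued) weakly holomorphic forms: a finite principal part at $\infty$ is realized by a form in $M_{k/2}^{!\epsilon}(N,\chi')$ if and only if it is orthogonal, under the residue pairing above, to the $q$-expansion of every holomorphic form in $M_{2-k/2}^{\epsilon^{*}}(N,\chi')$. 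So it suffices to find coefficients $c_{\ell}$ (for admissible $\ell$ with $m<\ell\leq 0$) making $P=\tfrac{1}{s(m)}q^{m}+\sum_{m<\ell\leq 0}c_{\ell}q^{\ell}$ orthogonal to every such $g$. The condition $\langle P,g\rangle=0$ is vacuous when $\mathrm{ord}_{\infty}(g)>-m$ (the relevant $\ell$-range is empty), cannot occur with $\mathrm{ord}_{\infty}(g)=-m$ since $-m\notin B^{*}$, and when $\mathrm{ord}_{\infty}(g)<-m$ it expresses the coefficient $c_{-\mathrm{ord}_{\infty}(g)}$ (a free variable, as $-\mathrm{ord}_{\infty}(g)\in(m,0]$) in terms of $c_{\ell}$ with smaller $\ell$. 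Running through a basis of $M_{2-k/2}^{\epsilon^{*}}(N,\chi')$ in echelon form (distinct orders of vanishing at $\infty$) therefore yields a triangular system with nonzero diagonal in the relevant $c_{\ell}$, solvable after setting the remaining $c_{\ell}$ to $0$; any $F$ with principal part $P$ has $\mathrm{ord}_{\infty}(F)=m$.

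The main obstacle is the invocation in the previous paragraph: that residue-orthogonality to holomorphic dual forms is the \emph{only} obstruction to prescribing a principal part. This is the vector-valued analogue of a standard fact (provable via Serre duality on the relevant modular curve, or via Eisenstein and Poincar\'e series), and it — together with the verification that $\langle f,g\rangle$ genuinely descends to a scalar form on $\mathrm{SL}_{2}(\mathbb Z)$, where the compatibility $\epsilon_{p}^{*}=\chi_{p}(-1)\epsilon_{p}$ of $\mathcal D$ and $\mathcal D^{*}$ is essential — is the heart of the proof; the two easy directions and the clearing step are then routine.
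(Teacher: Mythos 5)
The paper gives no proof of Proposition \ref{prop-zhang2}: it is imported verbatim from \cite{Zhang2}, Proposition 6.1, so there is nothing internal to compare against line by line. Your reconstruction follows the route one would expect and the one consistent with the toolkit this paper uses elsewhere, namely Borcherds' obstruction theorem \cite{Bor1} transported to the scalar-valued $\epsilon$-spaces via Zhang's isomorphism $\psi$, and its logical skeleton is sound: the residue pairing annihilates the product of a weight $k/2$ form with a dual weight $2-k/2$ form, which gives the non-existence direction, and the converse is the triangular solvability of the orthogonality conditions against an echelon basis of $M_{2-k/2}^{\epsilon^{*}}(N,\chi')$, followed by the basis-of-reduced-forms reduction. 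Three points need tightening. First, in the scalar-valued normalization the pairing carries the weights $s(n)$, i.e.\ it reads $\sum_{n}s(n)a(n)b(-n)=0$ rather than $\sum_{n}a(n)b(-n)=0$ (compare the paper's own use in Lemma \ref{lem-sm}, where the obstruction is $s(0)a(0)b(0)=0$); this is harmless since $s(n)>0$, but your unweighted formula is not literally what descends from the vector-valued pairing. Second, both your choice of $g$ with $\mathrm{ord}_{\infty}(g)=-m$ when $-m\in B^{*}$ and your claim that $\mathrm{ord}_{\infty}(g)=-m$ ``cannot occur since $-m\notin B^{*}$'' rest on the equivalence between existence of $f_{-m}^{*}$ and existence of some holomorphic form in $M_{2-k/2}^{\epsilon^{*}}(N,\chi')$ of order exactly $-m$; this is true, but it needs the basis property of reduced forms together with the fact (\cite{Zhang2}, Corollary 5.5, used repeatedly in this paper) that an $\epsilon$-form holomorphic at $\infty$ is holomorphic, and you should make that explicit. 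Third, the step you yourself flag as the main obstacle --- that orthogonality to all holomorphic dual forms is the \emph{only} obstruction to prescribing the coefficients of $q^{n}$ for $n\le 0$ --- is exactly the constant-term-included form of Borcherds' obstruction theorem; invoking it is legitimate (the paper already cites and uses \cite{Bor1}), but be aware that this is where all the analytic content lives, so what you have is a correct reduction of the proposition to that theorem plus reduced-form bookkeeping, which is presumably the substance of Zhang's own argument, rather than an independent proof of the key surjectivity statement.
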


\section{Proofs of Proposition \ref{prop-1.1} and Theorem \ref{thm-1.2}}
In this section, we prove the rationality of Fourier coefficients of reduced forms following the lines in \cite{BrBun1} and \cite{Zhang1}. For $f=\sum_{n}a(n)q^{n}$ and $\sigma\in\mathrm{Aut}(\mathbb{C})$, define $f^{\sigma}=\sum_{n}\sigma(a(n))q^{n}$.

\begin{lemma} \label{lem-1}
Let $\chi$ be a Dirichlet character modulo $N$ with values in $\mathbb{Q}$. If $f\in M_{k/2}^{!}(4N,\chi)$, so is $f^{\sigma}$.
\end{lemma}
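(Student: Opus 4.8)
The plan is to reduce the statement to a Galois-invariance property of the known generators of the relevant spaces of half-integral weight modular forms, together with the fact that the operators $Y(p)$, $Y(4)$, $U(m)$, $W(m)$ and the metaplectic slash action all have algebraic (in fact, here rational or at worst cyclotomic) coefficients. The claim $f \in M_{k/2}^{!}(4N,\chi) \Rightarrow f^{\sigma} \in M_{k/2}^{!}(4N,\chi)$ for $\sigma \in \mathrm{Aut}(\mathbb{C})$ is essentially the statement that the $\mathbb{C}$-vector space $M_{k/2}^{!}(4N,\chi)$ is defined over $\mathbb{Q}$ (or over the field generated by the values of $\chi$, which here is $\mathbb{Q}$), so that applying $\sigma$ coefficient-wise to a $q$-expansion sends the space to itself.

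First I would recall the standard structure result: every weakly holomorphic form of weight $k/2$ on $\Gamma_0(4N)$ with character $\chi$ can be written as a quotient $g/\Delta(4N\tau)^{r}$ (or $g \cdot \eta$-power) where $g$ is a \emph{holomorphic} modular form of integral-plus-half-integral weight on $\Gamma_0(4N)$ with the same character, for a suitable $r \ge 0$ depending on the order of the pole of $f$ at the cusps; here $\Delta$ has rational (integral) Fourier coefficients and is a unit in the ring of weakly holomorphic forms, so twisting by $\sigma$ commutes with multiplication and division by $\Delta(4N\tau)^{r}$. Thus it suffices to treat the holomorphic case $g \in M_{k/2}(4N,\chi)$. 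For the holomorphic case I would invoke the classical fact (Shimura; Stevens; or the theta-series plus Eisenstein/cusp-form decomposition) that $M_{k/2}(4N,\chi)$ has a basis consisting of forms with Fourier coefficients in $\mathbb{Q}(\chi) = \mathbb{Q}$ — for instance products of the theta series $\theta(\tau) = \sum q^{n^2}$ (rational coefficients) with integral-weight forms in $M_{(k-1)/2}(\Gamma_0(4N),\chi')$, the latter having a rational basis by the $q$-expansion principle for integral weight. Then $\sigma$ permutes such a basis up to rational linear combinations, hence preserves the space; and $g^{\sigma}$ is still of weight $k/2$ and character $\chi$ because $\sigma$ fixes $\mathbb{Q}$-valued $\chi$. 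Transferring back along the identity $f = (f \cdot \Delta(4N\tau)^{r})/\Delta(4N\tau)^{r}$ gives $f^{\sigma} \in M_{k/2}^{!}(4N,\chi)$.

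A cleaner route, which I would present as the main argument, is to characterize membership in $M_{k/2}^{!}(4N,\chi)$ purely via the transformation law $f|_{k/2}\widetilde{A} = \chi(d)\,\nu(A)^{k}\,f$ for all $A = \left(\begin{smallmatrix} a & b \\ c & d \end{smallmatrix}\right) \in \Gamma_0(4N)$ (plus meromorphy at the cusps, which is automatically $\sigma$-stable since it only constrains the shape of the $q$-expansions), and to argue directly on $q$-expansions at \emph{all} cusps. The point is that the theta multiplier $\nu(A) = \left(\frac{c}{d}\right)\varepsilon_d^{-1}$ takes values in $\{\pm 1, \pm i\}$ and the slash action in the metaplectic sense introduces only algebraic twists at each cusp; writing the transformation law as a system of linear relations among the Fourier coefficients $a(n)$ with algebraic coefficients that are \emph{themselves} permuted by $\sigma$ in a way that matches the permutation of $\chi$ and $\nu$, one checks that $a(n) \mapsto \sigma(a(n))$ again satisfies the (now $\sigma$-conjugated, but since $\chi$ is $\mathbb{Q}$-valued, identical) system. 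The one genuinely delicate point is bookkeeping of the factors of $i$ coming from $\varepsilon_d$ and from $j(A,\tau)$ at the non-infinite cusps: one must verify that $\sigma$ acts compatibly on these, i.e.\ that $\sigma(i) = \pm i$ is absorbed consistently on both sides of $f|_{k/2}\widetilde{A} = \chi(d)\nu(A)^k f$ — this works precisely because $k$ is odd and the same power of $i$ appears, so $\sigma$ applied to both sides yields the same identity with $\chi(d)$ unchanged (as $\chi$ is rational).

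The main obstacle I anticipate is exactly this last compatibility check for the multiplier system: a priori $\sigma \in \mathrm{Aut}(\mathbb{C})$ need not fix $i$, so one cannot naively say "$\sigma$ fixes all the constants." The resolution is to track $\varepsilon_d^{-k}$ and the branch of $j(A,\tau)^{-k}$ together, noting that their product over a given $\gamma$ is (up to a sign that $\sigma$ does fix) a root of unity whose $\sigma$-image is accounted for by the corresponding change in the $q$-expansion variable at the cusp; equivalently, one works with the already-$\sigma$-stable characterization via the operators $Y(4)$, $Y(p)$ and $U(m), W(m)$ of Section~\ref{mod}, all of whose defining formulas involve only rational constants, $\delta_m$, $\widetilde T^j$, and the matrices $\gamma_m$, so that a $\sigma$-twist of an eigenform for these operators is again an eigenform with $\sigma$-twisted (hence, for $\mathbb{Q}$-valued data, unchanged) eigenvalue. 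Either way the conclusion is the same: $M_{k/2}^{!}(4N,\chi)$ is $\mathrm{Aut}(\mathbb{C})$-stable, which is the assertion of the lemma.
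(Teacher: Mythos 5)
Your first, ``reduction'' route is essentially the paper's proof: clear the poles by multiplying by a power of $\Delta$ (the paper uses $\Delta(\tau)^{k'}$, you use $\Delta(4N\tau)^{r}$; either works since both have integral coefficients and vanish at all cusps), and then invoke the known fact that $\mathrm{Aut}(\mathbb{C})$ acts on the holomorphic spaces $M_{k/2}(4N,\chi)$ sending them to $M_{k/2}(4N,\chi^{\sigma})$ (Serre--Stark/Shimura), so that rationality of $\chi$ finishes the argument. One caveat there: your parenthetical description of a rational basis as products $\theta\cdot M_{(k-1)/2}(\Gamma_0(4N),\chi')$ is not correct in general (such products need not span $M_{k/2}(4N,\chi)$), but this is inessential, since the rational structure of $M_{k/2}(4N,\chi)$ can simply be cited, exactly as the paper cites Serre--Stark.

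The route you flag as ``the main argument,'' however, has a genuine gap. An automorphism $\sigma\in\mathrm{Aut}(\mathbb{C})$ other than the identity or complex conjugation is not continuous, so the coefficientwise twist $f\mapsto f^{\sigma}$ is \emph{not} compatible with evaluation of $f$ as an analytic function: there is no sense in which one may ``apply $\sigma$ to both sides'' of the analytic identity $f|_{k/2}\widetilde{A}=\chi(d)\nu(A)^{k}f$, nor of the eigenvalue equations for $Y(4)$, $Y(p)$, $U(m)$, $W(m)$, since the slash operators involve substitutions $\tau\mapsto A\tau$ and roots of $c\tau+d$ that $\sigma$ does not respect. The careful bookkeeping of $\varepsilon_d$, $i$, and branches of $j(A,\tau)$ does not repair this; the statement that the transformation law can be rewritten as a system of linear relations among the $a(n)$ with coefficients permuted by $\sigma$ is precisely the nontrivial content of the theorem on the rational (or $\mathbb{Q}(\chi)$-rational) structure of these spaces, which rests on the $q$-expansion principle or on Serre--Stark, not on formal conjugation of the functional equation. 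Likewise, saying that the $\epsilon$-condition/operator characterization is ``already $\sigma$-stable'' presupposes that $\sigma$ carries $M^{!}_{k/2}(4N,\chi)$ into some space of weakly holomorphic forms in the first place, which is the very thing to be proved. So the only rigorous path in your proposal is the first one, and it coincides with the paper's argument; the ``cleaner route'' should be dropped or replaced by an explicit appeal to the cited rationality theorem.
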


\begin{proof}
It is known that $\mathrm{Aut}(\mathbb{C})$ acts on the space {$M_{k/2}(\Gamma_{1}(4N),\chi)$}, the space of holomorphic modular forms of weight {$k/2$} on $\Gamma_{1}(4N)$ with Nebentypus $\chi$, and {$\sigma(M_{k/2}(4N,\chi))=M_{k/2}(4N,\chi^{\sigma})$}. (See \cite{SeSt1}.) Since $\chi$ has values in $\mathbb{Q}$, $\mathrm{Aut}(\mathbb{C})$ acts on {$M_{k/2}(4N,\chi)$}.

Note that {$f\Delta^{k'}\in M_{k/2+12k'}(4N,\chi)$} for a sufficiently large positive integer $k'$. Here $\Delta$ is the unique normalized cusp form of weight $12$ for $\mathrm{SL}_{2}(\mathbb{Z})$. The above observation shows that {$(f\Delta^{k'})^{\sigma}\in M_{k/2+12k'}(4N,\chi)$}. But $\Delta$ has integral Fourier coefficients, hence {$f^{\sigma}\Delta^{k'}=(f\Delta^{k'})^{\sigma}\in M_{k/2+12k'}(4N,\chi)$} and {$f^{\sigma}\in M_{k/2}^{!}(4N,\chi)$}.
\end{proof}

\begin{proposition}
Let $k<0$ and let {$f=\sum_{n}a(n)q^{n}\in M_{k/2}^{!\epsilon}(N,\chi')$}. Suppose that $a(n)\in\mathbb{Q}$ for $n<0$. Then all the coefficients $a(n)$ are rational with bounded denominators.
\end{proposition}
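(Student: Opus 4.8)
The plan is to run a Galois-descent argument: show that every $f^\sigma$, for $\sigma \in \mathrm{Aut}(\mathbb C)$, equals $f$ itself, and deduce that the non-principal coefficients of $f$ lie in $\mathbb Q$; then bounded denominators will follow from a separate finiteness input. First I would fix $f = \sum_n a(n)q^n \in M_{k/2}^{!\epsilon}(N,\chi')$ with $k<0$ and with $a(n)\in\mathbb Q$ for all $n<0$. For $\sigma\in\mathrm{Aut}(\mathbb C)$ I would form $f - f^\sigma$. By Lemma \ref{lem-1}, $f^\sigma \in M_{k/2}^{!}(4N,\chi')$ (using that $\chi'$, being a quadratic character, takes values in $\mathbb Q$ and is $\sigma$-fixed). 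The $\epsilon$-condition is a vanishing condition on Fourier coefficients in fixed arithmetic progressions, so it is preserved by $\sigma$; hence $f^\sigma \in M_{k/2}^{!\epsilon}(N,\chi')$ as well, and therefore $h_\sigma := f - f^\sigma \in M_{k/2}^{!\epsilon}(N,\chi')$. Now because $a(n)\in\mathbb Q$ for $n<0$, the principal part of $f$ is $\sigma$-fixed, so $h_\sigma$ has a Fourier expansion supported on non-negative exponents, i.e. $h_\sigma$ is a \emph{holomorphic} form in $M_{k/2}^{\epsilon}(N,\chi')$.

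The key step is then to observe that $M_{k/2}^{\epsilon}(N,\chi') = 0$ when $k<0$: a holomorphic modular form of negative weight $k/2$ on $\Gamma_0(4N)$ must vanish identically (its valence/weight is negative, or equivalently multiplying by a suitable power of $\Delta$ lands it in a space of holomorphic forms of weight still bounded above by something negative, forcing it to be zero). Hence $h_\sigma = 0$, i.e. $f^\sigma = f$ for every $\sigma \in \mathrm{Aut}(\mathbb C)$, which forces every coefficient $a(n)$ to be fixed by all automorphisms of $\mathbb C$, hence $a(n)\in\mathbb Q$ for all $n$. This settles rationality.

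For the bounded-denominator statement I would argue as follows. Pick $k'$ large enough that $g := f\,\Delta^{k'} \in M_{(k+24k')/2}(4N,\chi')$ is holomorphic of \emph{positive} weight (possible since $\Delta$ raises weight by $12$ and has integral $q$-expansion $q\prod(1-q^n)^{24}$; note $\Delta^{k'}$ has leading coefficient $1$). Since $f$ has rational Fourier coefficients, so does $g$; and $g$ lies in the finite-dimensional $\mathbb Q$-vector space $M_{(k+24k')/2}(4N,\chi')\cap\mathbb Q[[q]]$, which — being a space of holomorphic modular forms of half-integral weight with rational Nebentypus — has a basis of forms with bounded denominators (indeed, one can take a basis defined over $\mathbb Z$ after clearing denominators, appealing to the structure theory referenced via \cite{SeSt1} and the fact that such spaces are spanned by forms with algebraic, in fact rational, bounded-denominator coefficients). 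Writing $g$ as a rational linear combination of such a basis shows $g$, hence $f = g/\Delta^{k'}$, has bounded denominators; division by $\Delta^{k'}$ preserves this because $1/\Delta^{k'} = q^{-k'}\prod(1-q^n)^{-24k'} \in \mathbb Z[[q]]\cdot q^{-k'}$.

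The main obstacle is the bounded-denominator claim: rationality of the coefficients does not by itself bound denominators, so one genuinely needs the finite-dimensionality plus a rational (or integral) basis for the relevant holomorphic space, which is where the cited result on $\mathrm{Aut}(\mathbb C)$ acting on $M_{k/2}(\Gamma_1(4N),\chi)$ does the real work. Everything else — Galois-stability of the $\epsilon$-condition, vanishing of negative-weight holomorphic forms, and the integrality of $\Delta^{\pm k'}$'s $q$-expansion — is routine.
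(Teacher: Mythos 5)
Your strategy is the same as the paper's (Galois descent for rationality, then multiplication by a power of $\Delta$ and a bounded-denominator basis of a finite-dimensional holomorphic space), but two steps are asserted exactly where the real work lies. First, the inference ``$h_\sigma$ has only non-negative exponents, i.e.\ $h_\sigma\in M_{k/2}^{\epsilon}(N,\chi')$'' is not automatic: $\Gamma_0(4N)$ has several cusps, and membership in $M_{k/2}(4N,\chi')$ requires holomorphy at all of them. A weakly holomorphic form of negative weight that is holomorphic at $\infty$ need not vanish --- for instance $\Delta(4N\tau)/\Delta(\tau)^{2}$ has weight $-12$, is holomorphic at $\infty$, but has poles at other cusps --- so your key vanishing step fails without further input. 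The paper supplies that input by citing Zhang's Corollary 5.5 (\cite{Zhang2}): through the isomorphism with vector-valued forms for the Weil representation, a form satisfying the $\epsilon$-condition that is holomorphic at $\infty$ is holomorphic at every cusp, and only then does negative weight force $h_\sigma=0$. Relatedly, the $\epsilon$-condition is not purely a congruence condition on Fourier coefficients when $\chi_p=1$ for some $p\mid N$ (it also requires $f|Y(p)=-f$), so ``preserved by $\sigma$ because it is a coefficient condition'' needs a word more, though the paper also only remarks that this is easy to check.

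Second, for bounded denominators you assert that the half-integral weight space $M_{k/2+12k'}(4N,\chi')\cap\mathbb{Q}[\![q]\!]$ has a basis with bounded denominators, citing \cite{SeSt1}; that reference concerns the Galois action and weight $1/2$ forms and does not provide an integral (or bounded-denominator) basis in half-integral weight, so as written this step assumes what has to be proved. The standard repair, and the paper's, is to multiply additionally by $\theta$: then $\theta f\Delta^{k'}$ lies in the integral-weight space $S_{(k+1)/2+12k'}(\Gamma_1(4N))$, where Shimura's Theorem 3.52 (\cite{Shi2}) gives a basis with integer Fourier coefficients, hence bounded denominators for $\theta f\Delta^{k'}$; since $\theta^{-1}\in\mathbb{Z}[\![q]\!]$ and $q^{k'}\Delta^{-k'}\in\mathbb{Z}[\![q]\!]$, the bounded-denominator property descends to $f$. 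With these two repairs your argument coincides with the paper's proof.
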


\begin{proof}
Let $\sigma\in\mathrm{Aut}(\mathbb{C})$. By Lemma \ref{lem-1}, {$f^{\sigma}\in M_{k/2}^{!}(4N,\chi')$}. It is easy to check that the action of $\mathrm{Aut}(\mathbb{C})$ preserves the $\epsilon$-condition. Since $a(n)\in\mathbb{Q}$ for $n<0$, $h:=f-f^{\sigma}$ is holomorphic at $\infty$. By \cite[Corollary 5.5]{Zhang2}, {$h\in M_{k/2}^{\epsilon}(N,\chi')$}. But $k<0$, so $h=0$. It follows that $f$ has rational coefficients.

We know that {$\theta f\Delta^{k'}\in S_{(k+1)/2+12k'}(4N,\chi')\subset S_{(k+1)/2+12k'}(\Gamma_{1}(4N))$} for a sufficiently large positive integer $k'$. Shimura proved that {$S_{(k+1)/2+12k'}(\Gamma_{1}(4N))$} has a basis $\mathcal{B}$ consisting of forms whose Fourier coefficients at $\infty$ are rational integers. (See \cite[Theorem 3.52]{Shi2}.) Let {$S_{(k+1)/2+12k'}^{\mathbb{Q}}(\Gamma_{1}(4N))$} be the $\mathbb{Q}$-vector space of cusp forms in {$S_{(k+1)/2+12k'}(\Gamma_{1}(4N))$} whose Fourier coefficients at $\infty$ are rational numbers. Then $\mathcal{B}$ is a $\mathbb{Q}$-basis of {$S_{(k+1)/2+12k'}^{\mathbb{Q}}(\Gamma_{1}(4N))$} and {$f\theta\Delta^{k'}\in S_{(k+1)/2+12k'}^{\mathbb{Q}}(\Gamma_{1}(4N))$}. This implies that $f\theta\Delta^{k'}$ has coefficients with bounded denominators, and we conclude that the $a(n)$ are rational with bounded denominators.
\end{proof}

We are interested in integrality of Fourier coefficients. So we generalize Sturm's theorem to {$M_{k/2}^{!\epsilon}(N,\chi')$}. We begin with introducing the original Sturm's theorem.

\begin{theorem}[\cite{St1}] \label{thm-St}
Let $\mathcal{O}_{F}$ be the ring of integers of a number field $F$, $\mathfrak{p}$  any prime ideal, $N'$ a positive integer and $k'$  a positive integer. Assume {$f=\sum_{n}a(n)q^{n}\in M_{k'}(N',\chi)\cap\mathcal{O}_{F}[\![ q]\!]$}. If $a(n)\in\mathfrak{p}$ for $n\leq\frac{k'}{12}[\mathrm{SL}_{2}(\mathbb{Z}):\Gamma_{0}(N')]$, then $a(n)\in\mathfrak{p}$ for all $n$.
\end{theorem}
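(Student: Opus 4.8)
The plan is to reconstruct Sturm's classical argument. First I would reformulate the claim as a bound on the order of vanishing modulo $\mathfrak p$: it suffices to show that if $f=\sum a(n)q^n\in M_{k'}(N',\chi)\cap\mathcal O_F[\![q]\!]$ is not $\equiv 0\pmod{\mathfrak p}$, then the reduction $\bar f\in\mathbb F_{\mathfrak p}[\![q]\!]$ satisfies $\mathrm{ord}_q(\bar f)\le B:=\tfrac{k'}{12}[\mathrm{SL}_2(\mathbb Z):\Gamma_0(N')]$; for then the hypothesis $a(n)\in\mathfrak p$ for all $n\le B$ together with $f\not\equiv 0$ would give $\mathrm{ord}_q(\bar f)>B$, a contradiction. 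So the whole point is to control the order of vanishing at the cusp $\infty$ of a nonzero modular form mod $\mathfrak p$, and the strategy is to transport the question to $\mathrm{SL}_2(\mathbb Z)$, where the valence formula makes it exact.

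The first substantive step is to clear both the level and the nebentypus by a norm construction. Fix coset representatives $\gamma_1=I,\gamma_2,\dots,\gamma_d$ for $\Gamma_0(N')\backslash\mathrm{SL}_2(\mathbb Z)$, where $d=[\mathrm{SL}_2(\mathbb Z):\Gamma_0(N')]$, and put $g=\prod_{i=1}^d f|_{k'}\gamma_i$. For $\delta\in\mathrm{SL}_2(\mathbb Z)$ the forms $f|_{k'}(\gamma_i\delta)$ are, up to nebentypus scalars, a permutation of the $f|_{k'}\gamma_i$, so $g|_{k'd}\delta=\psi(\delta)\,g$ for a finite-order character $\psi$ of $\mathrm{SL}_2(\mathbb Z)$; since $\mathrm{SL}_2(\mathbb Z)^{\mathrm{ab}}\cong\mathbb Z/12\mathbb Z$ we have $\psi^{12}=1$. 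Because $f$ is holomorphic on $\mathbb H$ and at all cusps, so is each $f|_{k'}\gamma_i$, whence $g\in M_{k'd}(\mathrm{SL}_2(\mathbb Z),\psi)$ and $g^{12}\in M_{12k'd}(\mathrm{SL}_2(\mathbb Z))$. After enlarging $F$ to contain the values of $\chi$, one checks that $g$, hence $g^{12}$, has $\mathfrak p$-integral coefficients; the $q$-expansions of the $f|_{k'}\gamma_i$ lie in $\mathcal O_F[\zeta_{N'},1/N']$, so this is the one place where the case $\mathfrak p\mid N'$ would need extra care. Finally, $\bar f\ne 0$ forces $\bar g\ne 0$, and since $f=f|_{k'}\gamma_1$ is one of the factors while the rest are holomorphic at $\infty$, we get $\mathrm{ord}_q(\bar f)\le\mathrm{ord}_q(\bar g)$.

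The second step is the exact valence bound for $\mathrm{SL}_2(\mathbb Z)$ modulo $\mathfrak p$: if $h\in M_w(\mathrm{SL}_2(\mathbb Z))$ has $\mathfrak p$-integral coefficients and $\bar h\ne 0$, then $\mathrm{ord}_q(\bar h)\le w/12$. Over $\mathbb C$ this is the valence formula; modulo $\mathfrak p$ it follows from the integral Victor Miller basis $h_0,\dots,h_{D-1}$ of $M_w(\mathrm{SL}_2(\mathbb Z),\mathbb Z)$ with $h_i=q^i+O(q^{D})$ and $D=\dim_{\mathbb C}M_w(\mathrm{SL}_2(\mathbb Z))\le w/12+1$: writing $h=\sum_i\lambda_i h_i$, the triangular shape of these expansions forces all $\lambda_i\in\mathcal O_{F,\mathfrak p}$, and hence $\mathrm{ord}_q(\bar h)=\min\{i:\bar\lambda_i\ne 0\}\le D-1\le w/12$. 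Applying this to $h=g^{12}$ and $w=12k'd$ yields $12\,\mathrm{ord}_q(\bar g)=\mathrm{ord}_q(\overline{g^{12}})\le k'd$, so $\mathrm{ord}_q(\bar f)\le\mathrm{ord}_q(\bar g)\le k'd/12=B$, as required. Note that raising to the $12$th power and dividing the bound by $12$ only at the very end is what keeps the constant sharp, and in particular makes the presence of the character $\chi$ cost nothing.

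I expect the norm construction to be the delicate part: one must verify that $g$ really transforms under a character of $\mathrm{SL}_2(\mathbb Z)$ of order dividing $12$, that the inequality $\mathrm{ord}_q(\bar g)\ge\mathrm{ord}_q(\bar f)$ survives the possibly fractional $q$-expansions of the $f|_{k'}\gamma_i$ at the intermediate cusps, and that no prime above $\mathfrak p$ enters the denominators of $g^{12}$ (the genuine subtlety if one does not assume $\mathfrak p\nmid N'$). The $\mathrm{SL}_2(\mathbb Z)$ input, by contrast, is entirely standard once the integral Miller basis is available. Since this is Sturm's theorem, in the paper itself one simply cites \cite{St1}; the above is the sketch one would reconstruct if pressed for a proof.
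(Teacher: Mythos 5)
The paper contains no proof of Theorem \ref{thm-St} to compare against: it is Sturm's theorem, imported with the citation \cite{St1} and used as a black box (via Corollary \ref{cor-KLZ}), so the only question is whether your reconstruction would stand on its own.

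In outline it is the standard norm argument, and your level-one input is correct: the integral Miller basis argument does give $\mathrm{ord}_q(\bar h)\le w/12$ for any $h\in M_{w}(\mathrm{SL}_2(\mathbb{Z}))$ with $\mathfrak P$-integral coefficients and $\bar h\neq 0$, and the transfer argument showing $g=\prod_i f|_{k'}\gamma_i$ transforms under a character of $\mathrm{SL}_2(\mathbb{Z})$ killed by $12$ is fine. But two steps you pass over are genuine gaps. First, the assertion that $\bar f\neq 0$ forces $\bar g\neq 0$ is unjustified, and it is exactly where the naive product argument can fail: after enlarging $F$ and fixing a prime $\mathfrak P$ above $\mathfrak p$ in $F(\zeta_{N'})$, a conjugate $f|_{k'}\gamma_i$ with $i\geq 2$ need not be $\mathfrak P$-integral, and even after clearing denominators it may reduce to $0$ modulo $\mathfrak P$; in that case $\bar g=0$, the Miller-basis bound on $\mathrm{ord}_q(\overline{g^{12}})$ is vacuous, and the chain $\mathrm{ord}_q(\bar f)\le\mathrm{ord}_q(\bar g)\le k'd/12$ collapses. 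The usual repair is to rescale the complementary factor $h=\prod_{i\ge 2}f|_{k'}\gamma_i$ by a scalar $c$ (dividing by its content) so that $ch$ is $\mathfrak P$-integral with $\overline{ch}\neq 0$, and then to run the valence argument on $cg=f\cdot(ch)$; without some such normalization the proof is incomplete even when $\mathfrak p\nmid N'$. Second, the theorem as stated allows $\mathfrak p$ to be \emph{any} prime ideal, including those above primes dividing $N'$, and you explicitly leave that case open; since the expansions of the conjugates only lie in $\mathcal O_F[\zeta_{N'},1/N']$, your construction does not reach it, so as written you prove a weaker statement than the one quoted. That weaker statement would not suffice for the way Sturm's theorem is used in this paper, where integrality of all Fourier coefficients is deduced by applying the congruence criterion at every prime, including those dividing $4N$; so either the coprimality restriction must be removed by a further argument (reduction to the coprime case, or Sturm's own treatment), or the application breaks.
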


Using Theorem \ref{thm-St}, Kim, Lee and Zhang proved the following:

\begin{corollary}[\cite{KLZ1}, Corollary 3.2] \label{cor-KLZ}
Let $k'$ be a positive integer. Assume {$f=\sum_{n}a(n)q^{n}\in M_{k'}(4N,\chi)\cap\mathbb{Q}[\![ q]\!]$} with bounded denominator. If $a(n)\in\mathbb{Z}$ for $n\leq\frac{k'}{12}[\mathrm{SL}_{2}(\mathbb{Z}):\Gamma_{0}(4N)]$, then $a(n) \in\mathbb{Z}$ for all $n$.
\end{corollary}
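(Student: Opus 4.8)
The plan is to derive Corollary~\ref{cor-KLZ} from Sturm's theorem (Theorem~\ref{thm-St}) by arguing one rational prime at a time, clearing denominators only as much as is needed to satisfy the hypotheses of Theorem~\ref{thm-St}.

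Fix a rational prime $p$ and write $B=\tfrac{k'}{12}[\mathrm{SL}_{2}(\mathbb{Z}):\Gamma_{0}(4N)]$ for the Sturm bound in the statement. Since $f$ has bounded denominators, the integer $m:=-\min_{n}v_{p}(a(n))$ is well defined; if $m\le 0$ then $v_{p}(a(n))\ge 0$ for all $n$ and there is nothing to check at $p$, so assume $m\ge 1$ and seek a contradiction. Choose $D\in\mathbb{Z}_{>0}$ with $Df\in\mathbb{Z}[\![ q]\!]$, write $D=p^{v_{p}(D)}D'$ with $p\nmid D'$, and put
\[
h:=D'p^{m}f=\sum_{n}D'p^{m}a(n)\,q^{n}.
\]
As a scalar multiple of $f$, the series $h$ is again an element of $M_{k'}(4N,\chi)$; and from $v_{p}(a(n))\ge -m$ together with $v_{\ell}(a(n))\ge -v_{\ell}(D)=-v_{\ell}(D')$ for $\ell\ne p$, one checks directly that every coefficient of $h$ is a rational integer. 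Fixing an index $n_{0}$ with $v_{p}(a(n_{0}))=-m$, the coefficient $D'p^{m}a(n_{0})$ has $p$-adic valuation $m+v_{p}(a(n_{0}))=0$, hence is a $p$-adic unit. On the other hand, for $n\le B$ the hypothesis $a(n)\in\mathbb{Z}$ forces $v_{p}(D'p^{m}a(n))\ge m\ge 1$, that is, $D'p^{m}a(n)\in p\mathbb{Z}$.

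Now apply Sturm's theorem. Let $F$ be a number field containing the values of $\chi$ and let $\mathfrak{p}$ be a prime of $\mathcal{O}_{F}$ lying over $p$; then $h\in M_{k'}(4N,\chi)\cap\mathcal{O}_{F}[\![ q]\!]$, and $D'p^{m}a(n)\in p\mathbb{Z}\subseteq\mathfrak{p}$ for all $n\le B$. Theorem~\ref{thm-St} then gives $D'p^{m}a(n)\in\mathfrak{p}$ for every $n$; intersecting with $\mathbb{Z}$ yields $D'p^{m}a(n)\in\mathfrak{p}\cap\mathbb{Z}=p\mathbb{Z}$ for every $n$, contradicting the fact that $D'p^{m}a(n_{0})$ is a $p$-adic unit. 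Hence $m\le 0$, so $v_{p}(a(n))\ge 0$ for all $n$; as $p$ was an arbitrary prime, $a(n)\in\mathbb{Z}$ for all $n$.

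The one point that needs attention is the choice of the auxiliary scalar $D'p^{m}$, which must at the same time make $h$ have an integral $q$-expansion, keep at least one of its coefficients a $p$-adic unit, and make all of its coefficients in the Sturm range divisible by $p$. Isolating $m=-\min_{n}v_{p}(a(n))$ achieves all three simultaneously, and everything after that is an immediate invocation of Theorem~\ref{thm-St}, so I do not expect any real obstacle. (One can sidestep the enlargement of the coefficient field by noting that Sturm's theorem holds verbatim with $\mathcal{O}_{F}$ replaced by the discrete valuation ring $\mathbb{Z}_{(p)}$, reducing modular forms modulo $p$ over $\mathbb{F}_{p}$ in place of modulo $\mathfrak{p}$, and applying it directly to $h\in M_{k'}(4N,\chi)\cap\mathbb{Z}_{(p)}[\![ q]\!]$.)
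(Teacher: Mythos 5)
Your argument is correct: clearing denominators one prime at a time so that some coefficient stays a $p$-adic unit while all coefficients up to the Sturm bound land in $p\mathbb{Z}$, and then invoking Theorem~\ref{thm-St}, is exactly the standard denominator-clearing argument behind this statement (the paper itself only cites \cite{KLZ1} for it). No gaps worth flagging beyond the trivial case $f=0$, which your valuation convention already covers.
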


We extend this result to half-integral weight case. Let
$$\theta(\tau)=\sum_{n\in\mathbb{Z}}q^{n^{2}}=1+2q+2q^{4}+2q^{9}+\cdots,\quad (q=e^{2\pi i\tau},~\tau\in\mathbb{H}).$$
\begin{corollary} \label{cor-let}
{Let $k>0$ be an odd integer} and assume that {$f=\sum_{n}a(n)q^{n}\in M_{k/2}(4N,\chi)\cap\mathbb{Q}[\![ q]\!]$} with bounded denominator. If $a(n)\in\mathbb{Z}$ for {$n\leq\frac{k}{12}[\mathrm{SL}_{2}(\mathbb{Z}):\Gamma_{0}(4N)]$}, then $a(n)\in\mathbb{Z}$ for all $n$.
\end{corollary}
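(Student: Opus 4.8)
The plan is to reduce to the integral-weight Sturm bound of Corollary \ref{cor-KLZ} by multiplying $f$ by the theta function $\theta$. Set $g:=\theta f$. Since $k$ is odd, $(k+1)/2$ is a positive integer, and I claim $g\in M_{(k+1)/2}(4N,\psi)$ for a rational-valued Dirichlet character $\psi$ modulo $4N$. Indeed, $\theta$ transforms (under $\Gamma_{0}(4)$, hence under $\Gamma_{0}(4N)$) with the theta multiplier $\nu$ of Section \ref{meta}, while $f$ transforms under $\Gamma_{0}(4N)$ with multiplier $\nu^{k}\chi$; hence $g$ transforms with multiplier $\nu^{k+1}\chi$, and since $\nu^{2}(A)=\left(\frac{-1}{d}\right)$ this is the genuine Dirichlet character $\psi:=\left(\frac{-1}{\cdot}\right)^{(k+1)/2}\chi$ modulo $4N$, which is rational-valued because $\chi$ is. Both $\theta$ and $f$ are holomorphic at every cusp, hence so is $g$; and since $\theta\in\mathbb{Z}[\![q]\!]$ while $f\in\mathbb{Q}[\![q]\!]$ has bounded denominators, the same holds for $g$. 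Thus $g$ satisfies all the standing hypotheses of Corollary \ref{cor-KLZ} with $k'=(k+1)/2$.

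Next I would match the Sturm bounds. Write $B=\frac{k}{12}[\mathrm{SL}_{2}(\mathbb{Z}):\Gamma_{0}(4N)]$. The coefficient of $q^{n}$ in $g=\theta f$ is a $\mathbb{Z}$-linear combination of the coefficients $a(m)$ of $f$ with $m\le n$ (as $\theta=\sum_{j\in\mathbb{Z}}q^{j^{2}}$ has integer coefficients); hence the hypothesis that $a(n)\in\mathbb{Z}$ for $n\le B$ implies that the Fourier coefficients of $g$ at all indices $\le B$ are integers. The Sturm bound relevant to Corollary \ref{cor-KLZ} for weight $(k+1)/2$ on $\Gamma_{0}(4N)$ is $\frac{(k+1)/2}{12}[\mathrm{SL}_{2}(\mathbb{Z}):\Gamma_{0}(4N)]=\frac{k+1}{24}[\mathrm{SL}_{2}(\mathbb{Z}):\Gamma_{0}(4N)]$, and since $k\ge 1$ we have $2k\ge k+1$, so $B=\frac{2k}{24}[\mathrm{SL}_{2}(\mathbb{Z}):\Gamma_{0}(4N)]\ge\frac{k+1}{24}[\mathrm{SL}_{2}(\mathbb{Z}):\Gamma_{0}(4N)]$. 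Therefore Corollary \ref{cor-KLZ} applies to $g$ and gives $g\in\mathbb{Z}[\![q]\!]$.

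Finally I would invert $\theta$. Since $\theta=1+2q+2q^{4}+\cdots\equiv 1\pmod{q}$, it is a unit of $\mathbb{Z}[\![q]\!]$, and $\theta^{-1}\in\mathbb{Z}[\![q]\!]$. Because $f$ is holomorphic at $\infty$ we have $f\in\mathbb{Q}[\![q]\!]$, so the identity $f=\theta^{-1}g$ holds in $\mathbb{Q}[\![q]\!]$; as the right-hand side lies in $\mathbb{Z}[\![q]\!]$, we conclude $a(n)\in\mathbb{Z}$ for all $n$.

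The only point requiring care is the bookkeeping in the first paragraph: checking that multiplication by $\theta$ converts the metaplectic multiplier system of $f$ into an honest rational-valued Nebentypus on $\Gamma_{0}(4N)$, and that holomorphy at the cusps is preserved under this product. Everything after that is purely formal.
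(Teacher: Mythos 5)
Your proof is correct and follows essentially the same route as the paper: multiply by a power of $\theta$ to land in integral weight, apply Corollary \ref{cor-KLZ}, and recover $f$ using that $\theta$ is a unit in $\mathbb{Z}[\![q]\!]$. The only difference is that the paper multiplies by $\theta^{k}$ so the resulting weight is $k$ and the Sturm bound matches the hypothesis exactly, whereas your single factor of $\theta$ gives weight $(k+1)/2$ and in fact shows the smaller bound $\frac{k+1}{24}[\mathrm{SL}_{2}(\mathbb{Z}):\Gamma_{0}(4N)]$ already suffices.
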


\begin{proof}
By multiplying {$\theta^{k}$}, we have {$f\theta^{k}\in M_{k}(4N,\chi)$}. It suffices to show that all the coefficients of {$f\theta^{k}$} are integers. Since $a(n)\in\mathbb{Z}$ for $n\leq\frac{k}{12}[\mathrm{SL}_{2}(\mathbb{Z}):\Gamma_{0}(4N)]$, the same thing holds for the coefficients of {$f\theta^{k}$}. By Corollary \ref{cor-KLZ}, every coefficient of {$f\theta^{k}$} is an integer.
\end{proof}

Now we are ready to prove Proposition \ref{prop-1.1} and Theorem \ref{thm-1.2}.

\begin{proof}[Proof of Proposition \ref{prop-1.1}]
{Since $k'\geq |\mathrm{ord}_{\infty}(f)|/4N$, we see that $f(\tau)\Delta(4N\tau)^{k'}\in M_{k/2+12k'}^{\epsilon}(N,\chi)$ and that every coefficient of $f(\tau)\Delta(4N\tau)^{k'}$ less than or equal to $\frac{k+12k'}{12}[\mathrm{SL}_{2}(\mathbb{Z}):\Gamma_{0}(4N)]$ is an integer. By Corollary \ref{cor-let}, $f(\tau)\Delta(4N\tau)^{k'}$ has integer Fourier coefficients, hence so does $f$.}
\end{proof}



\begin{proof}[Proof of Theorem \ref{thm-1.2}]
Consider any reduced form $f_{m'}$ with $m'<-4N-m_{\epsilon}$. There exist integers $-4N-m_{\epsilon}\leq m_{0}'<-m_{\epsilon}$ and $l\geq 1$ such that $m'=-4Nl+m_{0}'$. By maximality of $m_{\epsilon}$, $f_{m_{0}'}$ exists. Consider now
$$g=j(4N\tau)^{l}f_{m_{0}'}=\sum_{n}b(n)q^{n}\in M_{k/2}^{!\epsilon}(N,\chi'),$$
where $j(\tau)$ denotes the classical $j$-function. It is known that $j$ has integral Fourier coefficients. By the assumption on $f_{m_{0}'}$,  we see that $b(n)s(m_{0}')\in\mathbb{Z}$ for each $n$.

Now $s(m_{0}')g$ and $s(m')f_{m'}$ share the same lowest power term, and we must have that
$$s(m')f_{m'}=s(m_{0}')g-\sum_{m>m'}s(m_{0}')b(m)s(m)f_{m}.$$
Hence $s(m')a_{m'}(n)=s(m_{0}')b(n)-s(m)b(m)s(m_{0}')a_{m}(n)\in\mathbb{Z}$ by the assumption and induction on $m$.
\end{proof}

\section{Proofs of Theorem \ref{thm-1.4} and Corollary \ref{cor-1.5}}
From now on, we shall assume that, for any reduced form
$$f_{m}=\sum_{n}a(m,n)q^{n}\in M_{k/2}^{!\epsilon}(N,\chi),$$
the modular form $s(m)f_{m}$ has integral Fourier coefficients. We remark that such integrality for each fixed reduced form can be verified by Proposition \ref{prop-1.1}. {Also, as we showed in Example \ref{ex-int}, every reduced form in the space {$M_{1/2}^{!+\cdots +}(7,1)$} satisfies the assumption.

We begin with a lemma.

\begin{lemma} \label{lem-sm}
Let $N\geq 1$ be a square-free integer. Then we have {$M_{3/2}^{+\cdots +}(N,1)=S_{3/2}^{+\cdots +}(N,1)$}.
\end{lemma}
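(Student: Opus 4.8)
The plan is to transport the question to Jacobi forms, where the desired equality becomes the classical fact that there is no holomorphic Eisenstein series of weight $2$. Since $k=3$, the forms in question have weight $3/2$, and the Eichler--Zagier correspondence (\cite[Theorem 5.6]{EiZa1}), which is exactly the correspondence used in Section~\ref{intro} to produce the forms $g_{D,N}$, furnishes an isomorphism $M_{3/2}^{+\cdots+}(N,1)\cong J_{2,N}$ onto the space $J_{2,N}$ of holomorphic Jacobi forms of weight $2$ and index $N$ (this is the space denoted $M_{2}^{+\cdots+}(N)$ on \cite[p.69]{EiZa1}). Under this isomorphism the subspace $S_{3/2}^{+\cdots+}(N,1)$ corresponds to the subspace $J_{2,N}^{\mathrm{cusp}}$ of Jacobi cusp forms. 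Hence the lemma is equivalent to the equality $J_{2,N}=J_{2,N}^{\mathrm{cusp}}$.

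I would then argue that $J_{2,N}$ contains no non-cuspidal form. Via the theta decomposition, an element of $J_{2,N}$ is the same datum as a holomorphic vector-valued modular form $(h_\mu)_\mu$ of weight $3/2$ for the Weil representation attached to $\mathbb Z/2N\mathbb Z$ with $Q(x)=x^2/4N$ --- equivalently, to the data $(4N,1,\epsilon)$ with $\epsilon=(+,\dots,+)$ --- and the Jacobi form is a cusp form precisely when every $h_\mu$ vanishes at $\infty$. A non-cuspidal $(h_\mu)_\mu$ would have to be corrected by an Eisenstein series of this weight and type, i.e.\ by the Jacobi--Eisenstein series $E_{2,N}$; but such a series is not holomorphic. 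Indeed, the defining Eisenstein average converges only for weight $>2$, and in weight $2$ (equivalently, half-integral weight $3/2$) Hecke's analytic-continuation device leaves an unavoidable non-holomorphic term, in complete parallel with the classical quasimodular $E_2$ and with Zagier's weight-$3/2$ Eisenstein series $\mathcal H(\tau)=\sum_{n\ge 0}H(n)q^n$; cf.\ the discussion of Eisenstein series in \cite[\S2]{EiZa1}. Consequently every holomorphic $(h_\mu)_\mu$ is a cusp form, so $J_{2,N}=J_{2,N}^{\mathrm{cusp}}$, and the lemma follows.

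The crux of the argument is precisely the non-existence of a holomorphic Jacobi--Eisenstein series of weight $2$; once that is granted, everything else is formal bookkeeping about the Eichler--Zagier isomorphism. As a sanity check on the mechanism: the Jacobi--Eisenstein series $E_{2,N}$ would restrict on $z=0$ to a weight-$2$ Eisenstein series for $\mathrm{SL}_2(\mathbb Z)$, and $M_2(\mathrm{SL}_2(\mathbb Z))=0$, so the only such object is the quasimodular $E_2$, which is why the completion is forced to be non-holomorphic. (One may also argue directly in weight $3/2$: the non-cuspidal holomorphic forms of weight $3/2$ are ternary theta series --- which violate the $+\cdots+$/$\epsilon$-condition, e.g.\ $\theta^3$ has a nonzero coefficient at $q$ --- while the would-be Eisenstein series of weight $3/2$ are mock rather than holomorphic by the same $E_2$-type obstruction, and the genuine weight-$3/2$ theta series that do satisfy the $\epsilon$-condition, the unary ones $\sum_n\psi(n)\,n\,q^{n^2}$ for odd $\psi$, are cusp forms; this yields $M_{3/2}^{+\cdots+}(N,1)=S_{3/2}^{+\cdots+}(N,1)$ by an independent route isolating the same obstruction.)
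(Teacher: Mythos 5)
Your reduction to Jacobi forms and the identification of cuspidality with the vanishing at $\infty$ of all theta components $h_\mu$ are fine, but the pivotal step --- ``a non-cuspidal $(h_\mu)_\mu$ would have to be corrected by an Eisenstein series of this weight and type'' --- is asserted rather than proved, and it is essentially equivalent to what you are trying to establish. What is needed is that the image of the constant-term map on $J_{2,N}$ lies in the span of the constant terms of \emph{holomorphic} Eisenstein series. In weight $\geq 3$ this holds because the Eisenstein construction produces holomorphic forms realizing every admissible system of constant terms, so $J/J^{\mathrm{cusp}}$ is exactly the Eisenstein space; in weight $2$ the construction fails to converge, but that by itself does not exclude the existence of some \emph{other} holomorphic form with nonzero constant term. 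Ruling that out requires an extra input: either the Hecke-trick completion $E_{2,N}^{*}$ together with an orthogonality/spectral argument showing that completed Eisenstein series span the complement of cusp forms among forms of moderate growth and that no holomorphic combination of them survives, or a pairing argument. A warning sign is that the square-freeness of $N$ never enters your argument, yet it is essential: for non-square-free index the discriminant form has several isotropic classes, hence several completed Eisenstein series, and linear combinations of non-holomorphic completions can be holomorphic (the $E_{2}(\tau)-pE_{2}(p\tau)$ phenomenon); correspondingly, holomorphic non-cuspidal weight-$3/2$ forms such as ternary theta series do occur at such levels.

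The paper closes exactly this gap with Borcherds' obstruction theorem \cite{Bor1}: pairing a holomorphic $f=\sum_{n\geq 0}a(n)q^{n}\in M_{3/2}^{+\cdots+}(N,1)$ against $\theta\in M_{1/2}^{\epsilon^{*}}(N,1)$ gives $s(0)a(0)=0$, and since $N$ is square-free one has $M_{1/2}^{\epsilon^{*}}(N,1)=\mathbb{C}\theta$ (Serre--Stark) and $0$ is the only isotropic element of the discriminant form, so the vanishing of this single constant term already forces the associated vector-valued form, and hence $f$, to be a cusp form. If you wish to salvage your route, replace the sentence about ``being corrected by an Eisenstein series'' by such a pairing argument, or compute the non-holomorphic part of the unique completed Eisenstein series (whose $0$-component is Zagier's $\mathcal{H}$ up to normalization) and prove the required spanning statement; either way the square-free hypothesis must actually be used.
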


\begin{proof}
Let {$f=\sum_{n\geq 0}a(n)q^{n}\in M_{3/2}^{+\cdots +}(N,1)$}. By Borcherds' obstruction theorem (Theorem 3.1 of \cite{Bor1}), we get
$$s(0)a(0)b(0)=0$$
for each {$g=\sum_{n}b(n)q^{n}\in M_{1/2}^{\epsilon^{*}}(N,1)$}. Since $N$ is square-free,
{$$M_{1/2}^{\epsilon^{*}}(N,1)=\mathbb{C}\theta.$$}
Setting $g=\theta$, we obtain
$$s(0)a(0)=0.$$
Since $s(0)\neq 0$, the form $f$ vanishes at $\infty$. By \cite[Proposition 5.3]{Zhang2}, the vector-valued form $\psi(f)$ is a cusp form. Here $\psi$ is the map constructed in Chapter 5 of \cite{Zhang2}. We conclude from \cite[Corollary 5.5]{Zhang2} that {$f=\psi^{-1}(\psi(f))\in S_{3/2}^{+\cdots +}(N,1)$}.
\end{proof}

\begin{remark} \label{rmk-zero}
If $k\geq 3$ is an odd integer and $(k,\epsilon)\neq (3,+)$, then
{$$M_{2-k/2}^{\epsilon^{*}}(N,1)=0.$$}
\end{remark}

Let $N\geq 1$ be an odd square-free integer. Suppose that $\ell$ is a prime with $\ell\nmid 4N$. 
Consider a modular form
$$f(\tau)=\sum_{n} a(n)q^{n}\in M_{k/2}^{!\epsilon}(N,1),$$ where the sum is over $n$ such that $\chi_{p}(n)\neq -\epsilon_{p}$ for all $p\mid N$.
Then the action of the Hecke operator $T_{k/2,4N}(\ell^{2})$ on $f$
is given by
\begin{equation}\label{1}
f(\tau)\mid T_{k/2,4N}(\ell^{2})=\sum_{n}\left(a(\ell^{2}n)+\ell^{\lambda-1}\left(\frac{(-1)^{\lambda}n}{\ell}\right)a(n)+\ell^{2\lambda-1}a(n/\ell^{2})\right)q^{n},
\end{equation}
where $\lambda=(k-1)/2$ and the sum is over $n$ such that $\chi_{p}(n)\neq -\epsilon_{p}$ for all $p\mid N$. Here we set $a(n/\ell^{2})=0$ if $\ell^{2}\nmid n$. Note that {$f(\tau)\mid T_{k/2,4N}(\ell^{2})\in M_{k/2}^{!\epsilon}(N,1)$}. We define $T_{k/2,4N}(\ell^{2n})$ for $n\geq 2$ recursively by
$$T_{k/2,4N}(\ell^{2n}):=T_{k/2,4N}(\ell^{2n-2})T_{k/2,4N}(\ell^{2})-\ell^{k-2}T_{k/2,4N}(\ell^{2n-4}).$$

\begin{remark} For $n\geq 2$, our $T_{k/2,4N}(\ell^{2n})$ is different from the $\ell^{2n}$-th Hecke operator given in \cite{Shi1}. See \cite[p.241]{Pur1} for details.
\end{remark}

By Proposition \ref{prop-zhang2}, the reduced form {$f_{m}\in M_{k/2}^{!\epsilon}(N,1)$} exists for every $m<0$ with $\chi_{p}(m)\neq -\epsilon_{p}$ for all $p\mid N$. We write
$$F_{m}(\tau)=s(m)f_{m}(\tau)=q^{m}+\sum\limits_{\substack{d\geq 0 \\ \chi_{p}(d)\neq -\epsilon_{p}~\textnormal{for all}~p\mid N}}B(m,d)q^{d}.$$
For any positive integer $t$ with $\gcd{(t,4N)}=1$, define
$$F_{m}^{(t)}:=F_{m}\mid T(t^{2}).$$
Then we obtain the coefficients $B_{t}(m,d)$ from the equation
$$F_{m}^{(t)}(\tau)=\mbox{(principal part)}+\sum\limits_{\substack{d\geq 0,\\ \chi_{p}(d)\neq-\epsilon_{p}~\textnormal{for all}~p\mid N}}B_{t}(m,d)q^{d}.$$

\medskip

For the rest of this section, let $k\geq 3$  be an odd integer and set $\lambda=(k-1)/2$, and assume 
\begin{enumerate}
\item $m \in \mathbb Z_{<0}$ such that  $\chi_{p}(m)\neq -\epsilon_{p}$ for all $p\mid N$,
\item $\ell$ is a prime with $\ell\nmid 4N$ and $\ell^{2}\nmid m$.
\end{enumerate}

\begin{proposition} \label{prop-FG}
 Assume that {$S_{k/2}^{\epsilon}(N,1)=0$}. Then, for any positive integer $t$ with $(t,4N)=1$ and any positive integer $n$, we have
$$F_{m}^{(t)}|T_{k/2,4N}(\ell^{2n})-\ell^{\lambda-1}\left(\frac{(-1)^{\lambda}m}{\ell}\right)F_{m}^{(t)}|T_{k/2,4N}(\ell^{2n-2})=\ell^{(k-2)n}F_{\ell^{2n}m}^{(t)}.$$
\end{proposition}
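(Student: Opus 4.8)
The plan is to argue that both sides of the claimed identity are reduced forms (up to the explicit leading term) living in $M_{k/2}^{!\epsilon}(N,1)$, and then to identify them by comparing principal parts together with the vanishing hypothesis $S_{k/2}^{\epsilon}(N,1)=0$. First I would observe that since $T_{k/2,4N}(t^2)$ and all $T_{k/2,4N}(\ell^{2j})$ preserve $M_{k/2}^{!\epsilon}(N,1)$ (this is recorded right after \eqref{1}, and the recursive $T_{k/2,4N}(\ell^{2n})$ is built from these), the left-hand side, call it $H$, lies in $M_{k/2}^{!\epsilon}(N,1)$, and likewise $F_{\ell^{2n}m}^{(t)}\in M_{k/2}^{!\epsilon}(N,1)$. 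So both sides are weakly holomorphic forms in the same space, and the difference $H-\ell^{(k-2)n}F_{\ell^{2n}m}^{(t)}$ lies there as well.

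Next I would compute the principal part (the polar part in $q$) of $H$. Using the Hecke action formula \eqref{1} iterated, the $q$-expansion of $F_m|T_{k/2,4N}(\ell^{2j})$ has principal part supported on powers $q^{\ell^{2j}m}$ and $q^{m}$ (and, when $\ell\mid$ denominators, nothing else negative, since $\ell^2\nmid m$ kills the $a(n/\ell^2)$ term at $n=m$); more precisely the coefficient of $q^{\ell^{2j}m}$ is $\ell^{(2\lambda-1)j}=\ell^{(k-2)j}$ and the coefficient of $q^{m}$ is governed by the middle term $\ell^{\lambda-1}\left(\tfrac{(-1)^\lambda m}{\ell}\right)$ applied $j$ times. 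The operator $T_{k/2,4N}(t^2)$, with $(t,4N)=1$, only spreads these among powers $q^{s^2 m}$ for $s\mid t$, so it does not interfere with the cancellation I am about to describe. Forming $H = F_m^{(t)}|T_{k/2,4N}(\ell^{2n}) - \ell^{\lambda-1}\left(\tfrac{(-1)^\lambda m}{\ell}\right) F_m^{(t)}|T_{k/2,4N}(\ell^{2n-2})$, the recursion defining $T_{k/2,4N}(\ell^{2n})$ is designed precisely so that the "constant-in-$\ell$" contributions at $q^{m}$ and at all intermediate powers $q^{\ell^{2j}m}$ with $j<n$ cancel, leaving principal part exactly $\ell^{(k-2)n}q^{\ell^{2n}m}$ (again using $\ell^2\nmid m$). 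This matches the principal part of $\ell^{(k-2)n}F_{\ell^{2n}m}^{(t)}$, whose leading term is $\ell^{(k-2)n}q^{\ell^{2n}m}$, noting that $\ell^{2n}m$ satisfies $\chi_p(\ell^{2n}m)=\chi_p(m)\neq-\epsilon_p$ so the reduced form $f_{\ell^{2n}m}$ exists by Proposition \ref{prop-zhang2}.

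Consequently $H-\ell^{(k-2)n}F_{\ell^{2n}m}^{(t)}$ has no principal part, hence is a holomorphic form in $M_{k/2}^{\epsilon}(N,1)$. When $k\geq 5$, or $k=3$ with $\epsilon\neq +$, Remark \ref{rmk-zero} or the hypothesis forces this holomorphic space to be small; in general the cleanest route is: by Lemma \ref{lem-sm} (for $k=3$) or directly, $M_{k/2}^{\epsilon}(N,1)=S_{k/2}^{\epsilon}(N,1)$, which is $0$ by assumption. Therefore the difference vanishes and the identity follows. The step I expect to be the main obstacle is the careful bookkeeping of the principal parts through the iterated Hecke recursion: one must check that no unexpected negative-power terms survive (this is where $\ell^2\nmid m$ is used repeatedly, to suppress the $a(n/\ell^2)$ term) and that the $T(t^2)$-twist genuinely commutes past everything without producing new poles outside $\{q^{s^2\ell^{2j}m}\}$ — a finite but delicate induction on $n$.
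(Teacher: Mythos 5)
Your overall strategy coincides with the paper's: form $G_{n}^{(t)}:=F_{m}^{(t)}|T_{k/2,4N}(\ell^{2n})-\ell^{\lambda-1}\left(\frac{(-1)^{\lambda}m}{\ell}\right)F_{m}^{(t)}|T_{k/2,4N}(\ell^{2n-2})$, match its principal part with that of $\ell^{(k-2)n}F_{\ell^{2n}m}^{(t)}$, and kill the holomorphic difference using $S_{k/2}^{\epsilon}(N,1)=0$. But the central computation is missing, and your preliminary claim about it is false as stated: for $j\geq 2$ the principal part of $F_{m}|T_{k/2,4N}(\ell^{2j})$ is supported on \emph{all} powers $q^{\ell^{2i}m}$, $0\leq i\leq j$, not just $q^{\ell^{2j}m}$ and $q^{m}$; already $F_{m}|T_{k/2,4N}(\ell^{4})$ has $q^{\ell^{2}m}$-coefficient $\ell^{k-2}\,\ell^{\lambda-1}\left(\frac{(-1)^{\lambda}m}{\ell}\right)$, which is nonzero when $\ell\nmid m$. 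You then assert, but do not prove, that forming $G_{n}^{(t)}$ makes all intermediate terms cancel, and you yourself flag this bookkeeping as the main obstacle. The paper disposes of it cleanly: it first reduces to $t=1$ (the Hecke operators commute), then proves the two-term recurrence $G_{n}^{(1)}=G_{n-1}^{(1)}|T_{k/2,4N}(\ell^{2})-\ell^{k-2}G_{n-2}^{(1)}$ (equation \eqref{3}), after which the principal part $\ell^{(k-2)n}q^{\ell^{2n}m}$ follows by a short induction, since the Legendre-symbol term in \eqref{1} vanishes because $\ell\mid\ell^{2n-2}m$ and the backward shift is cancelled exactly by $\ell^{k-2}G_{n-2}^{(1)}$ (the hypothesis $\ell^{2}\nmid m$ enters only at the base case). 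Your direct treatment of general $t$ can be salvaged, since the principal part of $f|T_{k/2,4N}(t^{2})$ depends only on that of $f$, but without the recurrence (or an equivalent induction) the proof is incomplete.

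The endgame also has a genuine error for $k\geq 5$: it is not true in general that $M_{k/2}^{\epsilon}(N,1)=S_{k/2}^{\epsilon}(N,1)$ (Cohen--Eisenstein series of weight $\geq 5/2$ lie in the plus space and are not cuspidal), and Lemma \ref{lem-sm} is stated and proved only for weight $3/2$ with $\epsilon=+\cdots+$, via the Borcherds obstruction against $\theta$. So for $(k,\epsilon)\neq(3,+)$ the hypothesis $S_{k/2}^{\epsilon}(N,1)=0$ alone does not annihilate the holomorphic difference, and Remark \ref{rmk-zero} does not ``force this holomorphic space to be small''; it says $M_{2-k/2}^{\epsilon^{*}}(N,1)=0$. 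The paper uses that fact together with Borcherds' obstruction theorem to produce a reduced form $f_{0}=1+O(q)$, which by the definition of reduced forms forces $B(m,0)=B(\ell^{2n}m,0)=0$; hence the holomorphic difference has vanishing constant term, lies in $S_{k/2}^{\epsilon}(N,1)=0$, and the identity follows. You need this constant-term argument (or some substitute) to handle $k\geq 5$; as written, your proof only covers $(k,\epsilon)=(3,+)$.
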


\begin{proof}

For convenience, define $G_{0}^{(t)}:=F_{m}^{(t)}$, and, for each $n\geq 1$, 
\begin{equation}\label{2}
G_{n}^{(t)}:=F_{m}^{(t)}\mid T_{k/2,4N}(\ell^{2n})-\ell^{\lambda-1}\left(\frac{(-1)^{\lambda}m}{\ell}\right)F_{m}^{(t)}\mid T_{k/2,4N}(\ell^{2n-2}).
\end{equation}
We need to show $G_{n}^{(t)} = \ell^{(k-2)n}F_{\ell^{2n}m}^{(t)}$.
Since the Hecke operators commute, it suffices to prove the proposition in the case $t=1$, which we now assume.

We claim that
\begin{equation}\label{3}
G_{n}^{(1)}=G_{n-1}^{(1)}\mid T_{k/2,4N}(\ell^{2}) -\ell^{k-2}\cdot G_{n-2}^{(1)}\quad\textnormal{for}\quad n\geq 2.
\end{equation}
Indeed, if $n=2$, then
\begin{align*}
G_{2}^{(1)}-&G_{1}^{(1)}\mid T_{k/2,4N}(\ell^{2})+\ell^{k-2}\cdot G_{0}^{(1)}\\
&=\left(F_{m}^{(1)}\mid T_{k/2,4N}(\ell^{4})-\ell^{\lambda-1}\left(\frac{(-1)^{\lambda}m}{\ell}\right)F_{m}^{(1)}\mid T_{k/2,4N}(\ell^{2})\right)\\
&\quad-\left(\left(F_{m}^{(1)}\mid T_{k/2,4N}(\ell^{2})\right)\mid T_{k/2,4N}(\ell^{2})-\ell^{\lambda-1}\left(\frac{(-1)^{\lambda}m}{\ell}\right)F_{m}^{(1)}\mid T_{k/2,4N}(\ell^{2})\right)+\ell^{k-2}\cdot F_{m}^{(1)}\\
&=F_{m}^{(1)}\mid T_{k/2,4N}(\ell^{4})-\left(F_{m}^{(1)}\mid T_{k/2,4N}(\ell^{2})\right)\mid T_{k/2,4N}(\ell^{2})+\ell^{k-2}\cdot F_{m}^{(1)} =0 .\\
\end{align*}
For $n\geq 3$,
\begin{align*}
G_{n-1}^{(1)}\mid &T_{k/2,4N}(\ell^{2})-\ell^{k-2}\cdot G_{n-2}^{(1)}\\
&=\left(F_{m}^{(1)}\mid T_{k/2,4N}(\ell^{2n-2})\right)\mid T_{k/2,4N}(\ell^{2})-\ell^{\lambda-1}\left(\frac{(-1)^{\lambda}m}{\ell}\right)\cdot\left(F_{m}^{(1)}\mid T_{k/2,4N}(\ell^{2n-4})\right)\mid T_{k/2,4N}(\ell^{2})\\
&\quad-\ell^{k-2}\cdot\left(F_{m}^{(1)}\mid T_{k/2,4N}(\ell^{2n-4})-\ell^{\lambda-1}\left(\frac{(-1)^{\lambda}m}{\ell}\right)F_{m}^{(1)}\mid T_{k/2,4N}(\ell^{2n-6})\right)\\
&=\left(F_{m}^{(1)}\mid T_{k/2,4N}(\ell^{2n-2})T(\ell^{2})-\ell^{k-2}\cdot F_{m}^{(1)}\mid T_{k/2,4N}(\ell^{2n-4})\right)\\
&\quad-\ell^{\lambda-1}\left(\frac{(-1)^{\lambda}m}{\ell}\right)\cdot\left(F_{m}^{(1)}\mid T_{k/2,4N}(\ell^{2n-4})T(\ell^{2})-\ell^{k-2}\cdot F_{m}^{(1)}\mid T_{k/2,4N}(\ell^{2n-6})\right)\\
&=F_{m}^{(1)}\mid T_{k/2,4N}(\ell^{2n})-\ell^{\lambda-1}\left(\frac{(-1)^{\lambda}m}{\ell}\right)\cdot F_{m}^{(t)}\mid T_{k/2,4N}(\ell^{2n-2}) =G_{n}^{(1)}. \\
\end{align*}

Since $G_{0}^{(1)}=F_{m}^{(1)}=F_{m}$, the principal part of $G_{0}^{(1)}$ is $q^{m}$. By \eqref{1}, the principal part of $G_{1}^{(1)}$ is $\ell^{k-2} q^{m\ell^{2}}$. Moreover, we see from \eqref{3} that, for all $n\geq 0$, the principal part of $G_{n}^{(1)}$ is equal to $\ell^{(k-2)n}q^{m\ell^{2n}}$. Since  $F_{m\ell^{2n}}^{(1)}=F_{m\ell^{2n}}$ has principal part $q^{m\ell^{2n}}$, {$G_n^{(1)} - \ell^{(k-2)n}F_{\ell^{2n}m}$ is holomorphic at the cusp $\infty$. Arguing as in the proof of Lemma \ref{lem-sm}}, we have 
{\[ G_n^{(1)} - \ell^{(k-2)n}F_{\ell^{2n}m} \in M_{k/2}^{\epsilon}(N,1).
\]}
If $(k,\epsilon)=(3,+)$, then it follows from Lemma \ref{lem-sm} that
{$$G_n^{(1)} - \ell^{(k-2)n}F_{\ell^{2n}m}  \in S_{k/2}^{\epsilon}(N,1).$$} Since {$S_{k/2}^{\epsilon}(N,1) =\{ 0 \}$} by assumption, we have $G_n^{(1)} = \ell^{(k-2)n}F_{\ell^{2n}m}$.

If $(k,\epsilon) \neq (3,+)$, then {$M_{2-k/2}^{\epsilon^{*}}(N,1)=0$} (Remark \ref{rmk-zero}). By Borcherds' obstruction theorem, there exists a reduced form $g$ such that $g=1+O(q)$. We see from the definition of reduced forms that $B(m,0)=B(\ell^{2n}m,0)=0$. Hence the constant term of $G_{n}^{(1)}-\ell^{(k-2)n}F_{\ell^{2n}m}$ is zero, and thus
{$$G_n^{(1)} - \ell^{(k-2)n}F_{\ell^{2n}m} \in S_{k/2}^{\epsilon}(N,1)=\{0\}.$$}
Therefore we have $G_n^{(1)} = \ell^{(k-2)n}F_{\ell^{2n}m}$ in this case too.

\end{proof}

Write
$$G_{n}^{(t)}=\mbox{(principal part)}+\sum\limits_{\substack{d\geq 0, \\ \chi_{p}(d)\neq -\epsilon_{p}~\textnormal{for all}~p\mid N}}C_{n}(d)q^{d}.$$
Proposition \ref{prop-FG} implies that, for all $n$ and $d$,
\begin{equation}\label{4}
C_{n}(d)=\ell^{(k-2)n}B_{t}(\ell^{2n}m,d).
\end{equation}

\begin{lemma} \label{lem-BC}
The following are true:
\begin{enumerate}
\item[\textnormal{(\lowerromannumeral{1})}] For any $d\geq 0$ with $\chi_{p}(d)\neq -\epsilon_{p}$ for all $p\mid N$, we have
$$C_{n}(\ell^{2}d)-\ell^{k-2}\cdot C_{n-1}(d)=C_{0}(\ell^{2n+2}d)-\ell^{\lambda-1}\left(\frac{(-1)^{\lambda}m}{\ell}\right)C_{0}(\ell^{2n}d).$$
\item[\textnormal{(\lowerromannumeral{2})}] If $\chi_{p}\neq -\epsilon_{p}$ for all $p\mid N$ and $\ell\parallel d$, then
$$C_{n}(d)=C_{0}(\ell^{2n}d)-\ell^{\lambda-1}\left(\frac{(-1)^{\lambda}m}{\ell}\right)C_{0}(\ell^{2n-2}d).$$

\item[\textnormal{(\lowerromannumeral{3})}] If $\chi_{p}\neq -\epsilon_{p}$ for all $p\mid N$ and $\ell\nmid d$, then
$$C_{n}(d)=C_{0}(\ell^{2n}d)+\left[\left(\frac{(-1)^{\lambda}d}{\ell}\right)-\left(\frac{(-1)^{\lambda}m}{\ell}\right)\right]\cdot\sum_{k=1}^{n}\ell^{(\lambda-1)k}\left(\frac{(-1)^{\lambda}d}{\ell}\right)^{k-1}C_{0}(\ell^{2n-2k}d).$$
\end{enumerate}
\end{lemma}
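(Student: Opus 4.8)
The plan is to derive all three identities from the recursion \eqref{3} together with the explicit Hecke action \eqref{1}, working entirely at the level of Fourier coefficients. Recall from \eqref{2} and \eqref{3} that $G_0^{(t)}=F_m^{(t)}$ and $G_n^{(t)}=G_{n-1}^{(t)}\mid T_{k/2,4N}(\ell^2)-\ell^{k-2}G_{n-2}^{(t)}$ for $n\geq 2$ (and $G_1^{(t)}=G_0^{(t)}\mid T_{k/2,4N}(\ell^2)-\ell^{\lambda-1}\left(\frac{(-1)^\lambda m}{\ell}\right)G_0^{(t)}$, since $G_{-1}^{(t)}$ plays no role). Reading off the $d$-th coefficient via \eqref{1}, for $n\geq 2$ we obtain
\[
C_n(d)=C_{n-1}(\ell^2 d)+\ell^{\lambda-1}\left(\tfrac{(-1)^\lambda d}{\ell}\right)C_{n-1}(d)+\ell^{2\lambda-1}C_{n-1}(d/\ell^2)-\ell^{k-2}C_{n-2}(d),
\]
and the analogous formula with $n=1$ has the extra subtracted term $\ell^{\lambda-1}\left(\frac{(-1)^\lambda m}{\ell}\right)C_0(d)$ in place of the $C_{n-2}$ term. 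This is the single computational engine from which everything follows.

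First I would prove (\lowerromannumeral{1}). Apply the displayed coefficient recursion with $d$ replaced by $\ell^2 d$: since $\ell^2\mid \ell^2 d$, the term $C_{n-1}((\ell^2 d)/\ell^2)=C_{n-1}(d)$ appears, and $\left(\frac{(-1)^\lambda \ell^2 d}{\ell}\right)=0$ because $\ell\mid \ell^2 d$. Thus $C_n(\ell^2 d)=C_{n-1}(\ell^4 d)+\ell^{2\lambda-1}C_{n-1}(d)-\ell^{k-2}C_{n-2}(\ell^2 d)$, whence $C_n(\ell^2 d)-\ell^{2\lambda-1}C_{n-1}(d)=C_{n-1}(\ell^4 d)-\ell^{k-2}C_{n-2}(\ell^2 d)$. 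Noting $2\lambda-1=k-2$, the right-hand side is of the same shape as the left with $n$ decremented and $d\mapsto \ell^2 d$; iterating down to $n=1$ telescopes the whole expression to $C_1(\ell^{2n}d)-\ell^{k-2}\cdot 0 \pmod{\text{principal part}}$, and the $n=1$ case supplies the term $-\ell^{\lambda-1}\left(\frac{(-1)^\lambda m}{\ell}\right)C_0(\ell^{2n}d)$ together with $C_0(\ell^{2n+2}d)$. (One must be slightly careful that $C_{n-2}$ of a negative argument contributes only to the principal part, not to the listed coefficients; since the identity is about the $d\geq 0$ coefficients this is harmless, and the ``(principal part)'' bookkeeping is exactly as in the proof of Proposition \ref{prop-FG}.)

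Next, (\lowerromannumeral{2}) and (\lowerromannumeral{3}) I would both prove by induction on $n$, using (\lowerromannumeral{1}) as the inductive step, with the base case $n=0$ trivial. For $\ell\parallel d$ (case (\lowerromannumeral{2})): then $\ell^2\nmid d$ so $C_{n-1}(d/\ell^2)=0$ and also $\left(\frac{(-1)^\lambda d}{\ell}\right)=0$, so the coefficient recursion collapses to $C_n(d)=C_{n-1}(\ell^2 d)-\ell^{k-2}C_{n-2}(d)$; now $\ell^2\parallel \ell^2 d$ is handled by (\lowerromannumeral{1}), and an elementary rearrangement plus the induction hypothesis applied to the $C_{n-1}$, $C_{n-2}$ terms (which have $\ell\parallel$ argument) yields the claimed two-term formula — the geometric-type sum does not appear precisely because the Legendre symbol vanishes. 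For $\ell\nmid d$ (case (\lowerromannumeral{3})): the full recursion is in play, $C_n(d)=C_{n-1}(\ell^2 d)+\ell^{\lambda-1}\left(\frac{(-1)^\lambda d}{\ell}\right)C_{n-1}(d)-\ell^{k-2}C_{n-2}(d)$; feed the first term through (\lowerromannumeral{1}) and the latter two through the induction hypothesis. Collecting the resulting terms, the coefficients $C_0(\ell^{2n-2k}d)$ for $k=1,\dots,n$ assemble into the stated sum; the factor $\left(\frac{(-1)^\lambda d}{\ell}\right)^{k-1}$ arises from iterating the single symbol, and the bracket $\left[\left(\frac{(-1)^\lambda d}{\ell}\right)-\left(\frac{(-1)^\lambda m}{\ell}\right)\right]$ from the mismatch between the symbol at $d$ (from \eqref{1}) and the symbol at $m$ (from the definition of $G_1$).

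The main obstacle I expect is purely bookkeeping: organizing the telescoping in (\lowerromannumeral{1}) and the two nested inductions in (\lowerromannumeral{3}) so that the powers of $\ell$ (using $2\lambda-1=k-2$ and $(k-2)n$ versus $(\lambda-1)k$) and the iterated Legendre symbols line up exactly with the stated closed forms, while consistently discarding the contributions to the principal part. There is no conceptual difficulty once \eqref{1} and \eqref{3} are in hand; the care lies in the index shifts $d\mapsto \ell^{2j}d$ and in verifying that every term $C_i$ with negative or non-admissible argument is absorbed into ``(principal part)'' rather than the coefficient sum.
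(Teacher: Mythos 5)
Your proposal is correct and takes essentially the same route as the paper: read off the coefficient recursion from \eqref{1}, \eqref{2} and \eqref{3}, telescope it with the argument replaced by $\ell^{2}d$ (using $2\lambda-1=k-2$ and the vanishing of the Legendre symbol at $\ell^{2}d$) to get (\lowerromannumeral{1}), then deduce (\lowerromannumeral{2}) directly and (\lowerromannumeral{3}) by induction on $n$. The only cosmetic slip is in (\lowerromannumeral{3}): part (\lowerromannumeral{1}) must be applied to the combination $C_{n-1}(\ell^{2}d)-\ell^{k-2}C_{n-2}(d)$ rather than to $C_{n-1}(\ell^{2}d)$ alone, after which only $C_{n-1}(d)$ needs the induction hypothesis --- exactly as in the paper's computation.
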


\begin{proof}
We first prove (\lowerromannumeral{1}). Note that
\begin{align*}
\ell^{2}d&\mbox{-th coefficient of }G_{1}^{(m)}=C_{1}(\ell^{2}d),\\
\ell^{2}d&\mbox{-th coefficient of }F_{m}^{(t)}\mid T_{k/2,4N}(\ell^{2})-\ell^{\lambda-1}\left(\frac{(-1)^{\lambda}m}{\ell}\right)F_{m}^{(t)}\\
&=B_{t}(m,\ell^{4}d)+\ell^{\lambda-1}\left(\frac{(-1)^{\lambda}\ell^{2}d}{\ell}\right)B_{t}(m,\ell^{2}d)+\ell^{k-2}\cdot B_{t}(m,d)\\
&\quad-\ell^{\lambda-1}\left(\frac{(-1)^{\lambda}m}{\ell}\right)B_{t}(m,\ell^{2}d)\\
&=B_{t}(m,\ell^{4}d)+\ell^{k-2}\cdot B_{t}(m,d)-\ell^{\lambda-1}\left(\frac{(-1)^{\lambda}m}{\ell}\right)B_{t}(m,\ell^{2}d)\\
&=C_{0}(\ell^{4}d)+\ell^{k-2}\cdot C_{0}(d)-\ell^{\lambda-1}\left(\frac{(-1)^{\lambda}m}{\ell}\right)C_{0}(\ell^{2}d).
\end{align*}
By \eqref{2}, we have
$$C_{1}(\ell^{2}d)=C_{0}(\ell^{4}d)+\ell^{k-2}\cdot C_{0}(d)-\ell^{\lambda-1}\left(\frac{(-1)^{\lambda}m}{\ell}\right)C_{0}(\ell^{2}d).$$
Hence,
$$C_{1}(\ell^{2}d)-\ell^{k-2}\cdot C_{0}(d)=C_{0}(\ell^{4}d)-\ell^{\lambda-1}\left(\frac{(-1)^{\lambda}m}{\ell}\right)C_{0}(\ell^{2}d).$$
When $n\geq 2$, we use \eqref{3} to find that
$$C_{n}(\ell^{2}d)-\ell^{k-2}\cdot C_{n-1}(d)=C_{n-1}(\ell^{4}d)-\ell^{k-2}\cdot C_{n-2}(\ell^{2}d)=\cdots=C_{1}(\ell^{2n}d)-\ell^{k-2}\cdot C_{0}(\ell^{2n-2}d).$$
From \eqref{2}, we see that
$$C_{1}(\ell^{2n}d)=C_{0}(\ell^{2n+2}d)+\ell^{k-2}\cdot C_{0}(\ell^{2n-2}d)-\ell^{\lambda-1}\left(\frac{(-1)^{\lambda}m}{\ell}\right)C_{0}(\ell^{2n}d).$$
Thus we obtain
$$C_{n}(\ell^{2}d)-\ell^{k-2}\cdot C_{n-1}(d)=C_{0}(\ell^{2n+2}d)-\ell^{\lambda-1}\left(\frac{(-1)^{\lambda}m}{\ell}\right)C_{0}(\ell^{2n}d).$$

We now prove (\lowerromannumeral{2}) and (\lowerromannumeral{3}). Observe that
\begin{align*}
d&\mbox{-th coefficient of }G_{1}^{(t)}=C_{1}(d),\\
d&\mbox{-th coefficient of }F_{m}^{(t)}\mid T_{k/2,4N}(\ell^{2})-\ell^{\lambda-1}\left(\frac{(-1)^{\lambda}m}{\ell}\right)F_{m}^{(t)}\\
&=B_{t}(m,\ell^{2}d)+\ell^{\lambda-1}\left(\frac{(-1)^{\lambda}d}{\ell}\right)B_{t}(m,d)+\ell^{k-2}\cdot B_{t}(m,d/\ell^{2})\\
&\quad-\ell^{\lambda-1}\left(\frac{(-1)^{\lambda}m}{\ell}\right)B_{t}(m,d)\\
&=
\begin{cases}
B_{t}(m,\ell^{2}d)-\ell^{\lambda-1}\left(\frac{(-1)^{\lambda}m}{\ell}\right)B_{t}(m,d) & \textnormal{if}\quad\ell\parallel d,\\
B_{t}(m,\ell^{2}d)+\ell^{\lambda-1}\left[\left(\frac{(-1)^{\lambda}d}{\ell}\right)-\left(\frac{(-1)^{\lambda}m}{\ell}\right)\right]B_{t}(m,d) & \textnormal{if}\quad\ell\nmid d,
\end{cases}\\
&=
\begin{cases}
C_{0}(\ell^{2}d)-\ell^{\lambda-1}\left(\frac{(-1)^{\lambda}m}{\ell}\right)C_{0}(d) & \textnormal{if}\quad\ell\parallel d,\\
C_{0}(\ell^{2}d)+\ell^{\lambda-1}\left[\left(\frac{(-1)^{\lambda}d}{\ell}\right)-\left(\frac{(-1)^{\lambda}m}{\ell}\right)\right]C_{0}(d) & \textnormal{if}\quad\ell\nmid d.
\end{cases}
\end{align*}
By \eqref{2}, we have
$$C_{1}(d)=
\begin{cases}
C_{0}(\ell^{2}d)-\ell^{\lambda-1}\left(\frac{(-1)^{\lambda}m}{\ell}\right)C_{0}(d) & \textnormal{if}\quad\ell\parallel d,\\
C_{0}(\ell^{2}d)+\ell^{\lambda-1}\left[\left(\frac{(-1)^{\lambda}d}{\ell}\right)-\left(\frac{(-1)^{\lambda}m}{\ell}\right)\right]C_{0}(d) & \textnormal{if}\quad\ell\nmid d.
\end{cases}
$$
On the other hand, it follows from \eqref{3} that
$$C_{n}(d)=C_{n-1}(\ell^{2}d)-\ell^{k-2}\, C_{n-2}(d)+\ell^{\lambda-1}\left(\frac{(-1)^{\lambda}d}{\ell}\right)C_{n-1}(d)$$
for $n\geq 2$. Applying part (\lowerromannumeral{1}) to $C_{n-1}(\ell^{2}d)-\ell^{k-2}\, C_{n-2}(d)$, we obtain
$$C_{n}(d)=C_{0}(\ell^{2n}d)-\ell^{\lambda-1}\left(\frac{(-1)^{\lambda}m}{\ell}\right)C_{0}(\ell^{2n-2}d)+\ell^{\lambda-1}\left(\frac{(-1)^{\lambda}d}{\ell}\right)C_{n-1}(d).$$
If $\ell\parallel d$, then we immediately obtain part (\lowerromannumeral{2}). Now assume that $\ell\nmid d$. Then by induction we have
\begin{align*}
C_{n}(d)&=C_{0}(\ell^{2n}d)-\ell^{\lambda-1}\left(\frac{(-1)^{\lambda}m}{\ell}\right)C_{0}(\ell^{2n-2}d)+\ell^{\lambda-1}\left(\frac{(-1)^{\lambda}d}{\ell}\right)C_{n-1}(d)\\
&=C_{0}(\ell^{2n}d)-\ell^{\lambda-1}\left(\frac{(-1)^{\lambda}m}{\ell}\right)C_{0}(\ell^{2n-2}d)+\ell^{\lambda-1}\left(\frac{(-1)^{\lambda}d}{\ell}\right)C_{0}(\ell^{2n-2}d)\\
&\quad+\ell^{\lambda-1}\left(\frac{(-1)^{\lambda}d}{\ell}\right)\cdot\left[\left(\frac{(-1)^{\lambda}d}{\ell}\right)-\left(\frac{(-1)^{\lambda}m}{\ell}\right)\right] \cdot \sum_{k=1}^{n-1}\ell^{(\lambda-1)k}\left(\frac{(-1)^{\lambda}d}{\ell}\right)^{k-1}C_{0}(\ell^{2n-2k-2}d)\\
&=C_{0}(\ell^{2n}d)+\ell^{\lambda-1}\left[\left(\frac{(-1)^{\lambda}d}{\ell}\right)-\left(\frac{(-1)^{\lambda}m}{\ell}\right)\right]\cdot C_{0}(\ell^{2n-2}d)\\
&\quad+\left[\left(\frac{(-1)^{\lambda}d}{\ell}\right)-\left(\frac{(-1)^{\lambda}m}{\ell}\right)\right]\cdot\sum_{k=2}^{n}\ell^{(\lambda-1)k}\left(\frac{(-1)^{\lambda}d}{\ell}\right)^{k-1}C_{0}(\ell^{2n-2k}d)\\
&=C_{0}(\ell^{2n}d)+\left[\left(\frac{(-1)^{\lambda}d}{\ell}\right)-\left(\frac{(-1)^{\lambda}m}{\ell}\right)\right]\cdot\sum_{k=1}^{n}\ell^{(\lambda-1)k}\left(\frac{(-1)^{\lambda}d}{\ell}\right)^{k-1}C_{0}(\ell^{2n-2k}d).
\end{align*}
This proves the identity in part (iii).
\end{proof}

We now prove Theorem \ref{thm-1.4} and Corollary \ref{cor-1.5}.

\begin{proof}[Proof of Theorem \ref{thm-1.4}] \hfill

\noindent (\lowerromannumeral{1}) 
By \eqref{4} and Lemma \ref{lem-BC} (\lowerromannumeral{1}), we have
\begin{align*}
& B_{t}(m,\ell^{2n+2}d)-\ell^{\lambda-1}\left(\frac{(-1)^{\lambda}m}{\ell}\right)B_{t}(m,\ell^{2n}d)\\
=&C_{0}(\ell^{2n+2}d)-\ell^{\lambda-1}\left(\frac{(-1)^{\lambda}m}{\ell}\right)C_{0}(\ell^{2n}d)=C_{n}(\ell^{2}d)-\ell^{k-2}C_{n-1}(d)\\
=&\ell^{(k-2)n}B_{t}(\ell^{2n}m,\ell^{2}d)-\ell^{k-2}\cdot\ell^{(k-2)(n-1)}B_{t}(\ell^{2n-2}m,d)\\
=&\ell^{(k-2)n} \left \{ B_{t}(\ell^{2n}m,\ell^{2}d)-B_{t}(\ell^{2n-2}m,d) \right \}.
\end{align*}
(\lowerromannumeral{2}) Using \eqref{4} and Lemma \ref{lem-BC} (\lowerromannumeral{3}), we obtain
\begin{align*}
&\ell^{(k-2)n}B_{t}(\ell^{2n}m,d)=C_{n}(d)\\
=&C_{0}(\ell^{2n}d)+\left[\left(\frac{(-1)^{\lambda}d}{\ell}\right)-\left(\frac{(-1)^{\lambda}m}{\ell}\right)\right]\cdot\sum_{k=1}^{n}\ell^{(\lambda-1)k}\left(\frac{(-1)^{\lambda}d}{\ell}\right)^{k-1}C_{0}(\ell^{2n-2k}d)\\
=&B_{t}(m,\ell^{2n}d)+\left[\left(\frac{(-1)^{\lambda}d}{\ell}\right)-\left(\frac{(-1)^{\lambda}m}{\ell}\right)\right]\cdot\sum_{k=1}^{n}\ell^{(\lambda-1)k}\left(\frac{(-1)^{\lambda}d}{\ell}\right)^{k-1}B_{t}(m,\ell^{2n-2k}d).
\end{align*} 
(\lowerromannumeral{3}) By \eqref{4} and Lemma \ref{lem-BC} (\lowerromannumeral{2}),
\begin{align*}
\ell^{(k-2)n}B_{t}(\ell^{2n}m,d)&=C_{n}(d)=C_{0}(\ell^{2n}d)-\ell^{\lambda-1}\left(\frac{(-1)^{\lambda}m}{\ell}\right)C_{0}(\ell^{2n-2}d)\\
&=B_{t}(m,\ell^{2n}d)-\ell^{\lambda-1}\left(\frac{(-1)^{\lambda}m}{\ell}\right)B_{t}(m,\ell^{2n-2}d).
\end{align*}
\end{proof}

\begin{proof}[Proof of Corollary \ref{cor-1.5}]
(1) First, suppose that $\left(\frac{-d}{\ell}\right)=\left(\frac{-m}{\ell}\right)\neq 0$. Then $\ell\nmid d$. By Theorem \ref{thm-1.4} (\lowerromannumeral{2}),
$$\ell^{(k-2)n}B_{t}(\ell^{2n}m,d)=B_{t}(m,\ell^{2n}d).$$
Now assume that $\ell\parallel d$ and $\ell\parallel m$. Then by Theorem \ref{thm-1.4} (\lowerromannumeral{3}),
$$\ell^{(k-2)n}B_{t}(\ell^{2n}m,d)=B_{t}(m,\ell^{2n}d).$$

(2) If $\ell\nmid d$, then $\ell\parallel\ell d$. By Theorem \ref{thm-1.4} (\lowerromannumeral{3}),
\begin{align*}
&B_{t}(m,\ell^{2n+1}d)-\ell^{\lambda-1}\left(\frac{(-1)^{\lambda}m}{\ell}\right)B_{t}(m,\ell^{2n-1}d)\\
=&B_{t}(m,\ell^{2n}(\ell d))-\ell^{\lambda-1}\left(\frac{(-1)^{\lambda}m}{\ell}\right)B_{t}(m,\ell^{2n-2}(\ell d))\\
=&\ell^{(k-2)n}\, B_{t}(\ell^{2n}m,\ell d)\equiv 0\pmod{\ell^{(k-2)n}}.
\end{align*}
If $\ell\mid d$, then by Theorem \ref{thm-1.4} (\lowerromannumeral{1}), we obtain
\begin{align*}
&B_{t}(m,\ell^{2n+1}d)-\ell^{\lambda-1}\left(\frac{(-1)^{\lambda}m}{\ell}\right)B_{t}(m,\ell^{2n-1}d)\\
=&B_{t}(m,\ell^{2n+2}(d/\ell))-\ell^{\lambda-1}\left(\frac{(-1)^{\lambda}m}{\ell}\right)B_{t}(m,\ell^{2n}(d/\ell))\\
=&\ell^{(k-2)n}\, (B_{t}(\ell^{2n}m,\ell d)-B_{t}(\ell^{2n-2}m,d/\ell))
\equiv 0\pmod{\ell^{(k-2)n}}.
\end{align*}

\end{proof}

\section*{Acknowledgement}
We would like to thank Professor Yichao Zhang for his kind and valuable comments.


\begin{thebibliography}{FKKL}

\bibitem{Ahl1} S. Ahlgren, {\em Hecke relations for traces of singular moduli}, Bull. London Math. Soc. \textbf{44} (2012), 99--105.
\bibitem{AhO1} S. Ahlgren and K. Ono, {\em Arithmetic of singular moduli and class polynomials}, Compos. Math. \textbf{151} (2005), 629--645.
\bibitem{Bor1} R. E. Borcherds, {\em The Gross--Kohnen--Zagier theorem in higher dimensions}, Duke Math. J. \textbf{97} (1999), no. 2, 219--233.
\bibitem{BrBun1} J. H. Bruinier and M. Bundschuh, {\em On Borcherds products associated with lattices of prime discriminant}, Ramanujan J. \textbf{7} (2003), no. 1-3, 49--61.
\bibitem{BrEhFr1} J. H. Bruinier, S. Ehlen and E. Freitag, {\em Lattices with many Borcherds products}, Math. Comput. \textbf{85} (2016), 1953--1981.
\bibitem{BrGeHaZa1} J. H. Bruinier, G. van der Geer, G. Harder and D. Zagier, \emph{The 1-2-3 of Modular Forms: Lectures at a Summer School in Nordfjordeid, Norway}, Springer, 2008.
\bibitem{CS1} J. Conway and N. J. A. Sloane, \emph{Sphere packings, lattices and groups},  Grundlehren der Mathematischen Wissenschaften vol. {\bf 290}, Springer, 1998.
\bibitem{Edix1} B. Edixhoven, {\em On the $p$-adic geometry of traces of singular moduli}, Int. J. Number Theory \textbf{1} (2005), 495--497.
\bibitem{EiZa1} M. Eichler and D. Zagier, \emph{The Theory of Jacobi Forms}, Progress in Mathematics vol. {\bf 55}, Birkh\"{a}user Boston Inc., Boston, MA, 1985.
\bibitem{Guer1} P. Guerzhoy, {\em Some congruences for traces of singular moduli}, J. Number Theory \textbf{122} (2007), 151--156.
\bibitem{Jen1} P. Jenkins, {\em $p$-adic properties of traces of singular moduli}, Int. J. Number Theory \textbf{1} (2005), 103--107.
\bibitem{Jen2} \bysame, {\em Traces of singular moduli, modular forms, and Maass forms}, Ph. D. thesis, Univ. Wisconsin--Madison, 2006.
\bibitem{Kim1} C. H. Kim, {\em Borcherds products associated with certain Thompson series}, Compos. Math. \textbf{140} (2004), 541--551.
\bibitem{Kim2} \bysame, {\em Traces of Singular Values and Borcherds Products}, Bull. London Math. Soc. \textbf{38} (2006), 730--740.
\bibitem{KLZ1} H. H. Kim, K. H. Lee and Y. Zhang, {\em Weakly holomorphic modular forms and rank 2 hyperbolic Kac-Moody algebras}, Trans. of Amer. Math. Soc. \textbf{367} (2015), no. 12, 8843--8860.
\bibitem{Koh1} W. Kohnen, {\em Newforms of half-integral weight}, J. Reine Angew. Math. \textbf{333} (1982), 32--72.
\bibitem{KS1} J. K. Koo and D. H Shin, {\em A Note on Traces of Singular Moduli}, Tokyo J. of Math. \textbf{35} (2012), no. 2, 291--296.

\bibitem{Ni1} V. V. Nikulin, \emph{Integral symmetric bilinear forms and some of
their geometric applications}, Math. USSR Izv. \textbf{14} (1980),
no. 1, 103--167.

\bibitem{Osb1} R. Osburn, \emph{Congruence for Traces of Singular Moduli}, Ramanujan J. \textbf{14} (2007), no. 3, 411--419.

\bibitem{Pur1} S. Purkait, {\em Hecke operators in half-integral weight}, J. Th\'{e}or. Nr. Bordx. \textbf{26} (2014), 233--251.

\bibitem{Sch1} N. R. Scheithauer, {\em The Weil representation of $SL_{2}(\mathbb{Z})$ and some applications}, Int. Math. Res. Not. IMRN (2009), no. 8, 1488--1545.

\bibitem{Sch2} \bysame, {\em Some constructions of modular forms for the Weil representation of $SL_{2}(\mathbb{Z})$},  Nagoya Math. J. {\bf 220} (2015), 1--43.

\bibitem{SeSt1} J.-P. Serre and  H. Stark, {\em Modular forms of weight $\frac{1}{2}$}, in \emph{Modular Forms of One Variable \rom{6}}, Lecture Notes in Mathematics \textbf{627}, Springer, 1977, 27--67. 
\bibitem{Shi1} G. Shimura, {\em On modular forms of half integral weight}, Ann.  Math. \textbf{97} (1973), no. 3, 440--481.
\bibitem{Shi2} \bysame, \emph{Introduction to the Arithmetic Theory of Automorphic Functions}, Princeton University Press, 1971.
\bibitem{Str1} F. Str\"{o}mberg, \emph{Weil representations associated to finite quadratic modules}, Math. Z. \textbf{275} (2013), 509--527.
\bibitem{St1} J. Sturm, {\em On the congruence of modular forms}, Lecture Notes in Mathematics \textbf{1240}, Springer, Berlin, 1987, 275--280.
\bibitem{Ueda1} M. Ueda, {\em On twisting operators and newforms of half-integral weight}, Nagoya Math. J. {\bf 131} (1993), 135--205.
\bibitem{Zag2} D. Zagier, {\em Traces of singular moduli}, in {\em Motives, polylogarithms and Hodge theory}, part \rom{1} (F. Bogomolov and L. Katzarkov, eds.), International Press Lecture Series (International Press, Somerville, MA, 2002), 211--244.
\bibitem{Zhang1} Y. Zhang, {\em An isomorphism between scalar-valued modular forms and modular forms for Weil representations}, Ramanujan J. \textbf{37} (2015), no. 1, 181--201.
\bibitem{Zhang2} \bysame, {\em Half-integral weight modular forms and modular forms for Weil representations}, arXiv:1705.07546.
\bibitem{Zhang3} \bysame, {\em Zagier duality and integrality of Fourier coefficients for weakly holomorphic modular forms}, J. Number Theory \textbf{155} (2015), 139--162.

\end{thebibliography}
\end{document}